\newtheorem{theorem}{Theorem}[section]
\newtheorem{corollary}{Corollary}[theorem]
\newtheorem{lemma}[theorem]{Lemma}
\newtheorem{proposition}[theorem]{Proposition}
\newtheorem{conjecture}[theorem]{Conjecture}
\newtheorem{definition}[theorem]{Definition}
\DeclareSymbolFont{cyrletters}{OT2}{wncyr}{m}{n}
\DeclareMathSymbol{\Sha}{\mathalpha}{cyrletters}{"58}
\theoremstyle{definition}
\newtheorem{example}[theorem]{Example}
\newcommand{\N}{\mathbb{N}}
\newcommand{\Nd}{\mathbb{N}^d}
\newcommand{\Hs}{\mathcal{H}}
\newenvironment{customthm}[1]
  {\innercustomthm}
  {\endinnercustomthm}
\newenvironment{customprop}[1]
  {\innercustomprop}
  {\endinnercustomprop}
\begin{document}
\title{Frobenius allowable gaps of Generalized Numerical Semigroups}
\author{Deepesh Singhal}
\address{University of California, Irvine}
\email{singhald@uci.edu}
\author{Yuxin Lin}
\address{University of Notre Dame, USA}
\email{ylin9@nd.edu}

\begin{abstract}
    A generalised numerical semigroup (GNS) is a submonoid $S$ of $\Nd$ for which the complement $\Nd\setminus S$ is finite. The points in the complement $\Nd\setminus S$ are called gaps. A gap $F$ is considered Frobenius allowable if there is some relaxed monomial ordering on $\Nd$ with respect to which $F$ is the largest gap. We characterise the Frobenius allowable gaps of a GNS.
    A GNS that has only one Frobenius allowable gap is called a Frobenius GNS. We estimate the number of Frobenius GNS with a given Frobenius gap $F=(F^{(1)},\dots,F^{(d)})\in\Nd$ and show that it is close to $\sqrt{3}^{(F^{(1)}+1)\cdots (F^{(d)}+1)}$ for large $d$.
    We define notions of quasi-irreducibility and quasi-symmetry for GNS. While in the case of $d=1$ these notions coincide with irreducibility and symmetry of GNS, they are distinct in higher dimensions.
    
\end{abstract}

\maketitle

\section{Introduction}
A \emph{numerical semigroup} $S$ is a subset of the natural numbers that contains $0$, is closed under addition and has a finite complement $\N\setminus S$. The numbers in $\N\setminus S$ are called \emph{gaps} and the largest gap is called the \emph{Frobenius number} $F(S)$.
There is a big literature on numerical semigroups, see \cite{Gen ref 1}, \cite{Gen ref 2} for a general reference.

A generalised numerical semigroup (GNS) $S$ is a subset of $\Nd$ that contains $0$, is closed under addition and has a finite complement $\Nd\setminus S$. The points in the complement are called \emph{gaps} and the collection of all gaps is denoted by $\Hs(S)=\Nd\setminus S$. The number of gaps is called the genus, $g(S)=|\Hs(S)|$. Failla, Peterson and Utano gave this definition of GNS in \cite{Def F}. They also studied the question of counting GNSs by genus and generalised the notion of semigroup tree.

We have a natural partial ordering on $\Nd$. Given $x,y\in \Nd$, let $x^{(i)},y^{(i)}$ be the $i^{th}$ component of $x, y$ respectively. Define $x\leq y$ if $x^{(i)}\leq y^{(i)}$ for each $1\leq i\leq d$. However this is not enough to define the Frobenius gap of $S$ as $\Hs(S)$ could have more than one maximal elements under the natural partial ordering.
Failla et al. \cite{Def F} extend the notion of Frobenius gap to generalised numerical semigroups with the help of \emph{relaxed monomial orderings} on $\Nd$.
\begin{definition}
A total order $\prec$ on the elements of $\mathbb{N}^d$ is called a relaxed monomial order if it satisfies:\\
i) If $v,w\in\mathbb{N}^d$ and if $v\prec w$ then $v\prec w+u$ for any $u\in\mathbb{N}^d$.\\
ii) If $v\in\mathbb{N}^d$ and $v\neq 0$ then $0\prec v$.
\end{definition}
Given a relaxed monomial order $\prec$ on $\Nd$ and a GNS $S\subseteq\Nd$, its Frobenius gap is defined as $$F_{\prec}(S)=\max_{\prec}(\Hs(S)).$$
Of course, different relaxed monomial orders can lead to different gaps becoming the Frobenius gap of $S$.  Cisto, Failla, Peterson and Utano in \cite{Irr} define a gap of $S$ to be \emph{Frobenius allowable} if it is the Frobenius gap with respect to some relaxed monomial ordering. It is clear that all Frobenius allowable gaps must be maximal elements of $\Hs(S)$ under the natural partial ordering. In \cite{Irr} the authors ask whether all maximal elements of $\Hs(S)$ under the natural partial ordering are Frobenius allowable. They prove this (Proposition 4.5 and 4.13, \cite{Irr}) in the special case when $\Hs(S)$ has exactly one or two maximal elements under the natural partial ordering. We answer their question in the general case.

\begin{customthm}{\ref{Frobenius allowable}}
Given a GNS $S\subseteq\Nd$ the Frobenius allowable gaps of $S$ are precisely the maximal elements of $\Hs(S)$ under the natural partial ordering.
\end{customthm}

Cisto et al. in \cite{Irr} define a Frobenius GNS to be a GNS $S$ for which $\Hs(S)$ has exactly one maximal gap under the natural partial ordering. This is equivalent to the property that the Frobenius gap of $S$ being independent of the choice of relaxed monomial ordering. The authors of \cite{Irr, Almost Irr} study certain families of GNS which they show are Frobenius GNS.

If one fixes a point $F\in\Nd\setminus\{0\}$ with $d\geq 2$ then it is seen that there are infinitely many GNS for which $F$ is Frobenius allowable. However the number of Frobenius GNS with a given Frobenius gap $F$ is clearly finite. We denote this by
$$N(F)=\#\{S\subseteq\Nd\mid S\text{ is a Frobenius GNS}, F(S)=F\}.$$
In the case of numerical semigroups i.e. $d=1$, Backelin \cite{Backelin} computes $N(F)$ and proves that
$$2^{\lfloor\frac{F-1}{2}\rfloor}\leq N(F)\leq 4\times 2^{\lfloor\frac{F-1}{2}\rfloor}.$$
We build on the work of Backelin and make the first systematic study of counting Frobenius GNS with a given Frobenius number.

Given a point $F$ in $\Nd$, let
$$\|F\|=\prod_{i=1}^{d}(F^{(i)}+1).$$
So $\|F\|$ is the number of points in the box $\{x\in\Nd\mid 0\leq x\leq F\}$. We trivially know that $N(F)\leq 2^{\|F\|}$. For large $d$ we prove that $N(F)$ is quite close to $\sqrt{3}^{\|F\|}$.

\begin{customthm}{\ref{count large d}}
Given $\epsilon>0$, there exist $M>0$ such that for every $d>M$ and $F\in\Nd$, we have
$$\left(\sqrt{3}-\epsilon\right)^{\|F-1\|}\leq N(F)\leq \sqrt{3}^{\|F\|}.$$
\end{customthm}

Given a GNS $S\subseteq\Nd$, a gap $P\in\Hs(S)$ is called a pseudo-Frobenius gap of $S$ if for nonzero every element $x$ of $S$, $x+P$ is also an element of $S$. The collection of all pseudo-Frobenius gaps of $S$ is denoted by $PF(S)$. And the number of pseudo-Frobenius gaps of $S$ is called its type, $t(S)$.

The family of irreducible numerical semigroups have received considerable attention in the literature. A numerical semigroup is called irreducible if it cannot be expressed as the intersection of two numerical semigroups properly containing it. Several characterisations of irreducible numerical semigroups are known. The authors of \cite{Type} prove that given a numerical semigroup $S$ with $F(S)=F$, the following are equivalent
\begin{itemize}
    \item $S$ is irreducible.
    \item $S$ is maximal (with respect to set theoretic inclusion) among all numerical semigroups that do not contain $F$.
    \item For every gap $x\in\Hs(S)$, either $2x=F(S)$ or $F(S)-x\in S$.
    \item Either $PF(S)=\{F\}$, or $PF(S)=\{\frac{F}{2},F\}$.
\end{itemize}
A numerical semigroup with $PF(S)=F$ is called symmetric and one with $PF(S)=\{\frac{F}{2},F\}$ is called pseudo-symmetric. A numerical semigroup has type $1$ if and only if it is symmetric.

The authors of \cite{Irr} define a GNS to be irreducible if it cannot be written as an intersection of two numerical semigroups properly containing it. They prove that irreducible GNS are always Frobenius GNS. They prove that the following are equivalent for a GNS $S$:
\begin{itemize}
    \item $S$ is irreducible.
    \item There is a gap $F\in\Hs(S)$ such that for every gap $x\in\Hs(S)$, either $2x=F(S)$ or $F(S)-x\in S$.
    \item There is a gap $F\in\Hs(S)$ such that either $PF(S)=\{F\}$ or $PF(S)=\{\frac{F}{2},F\}$.
\end{itemize}
They call a GNS symmetric if $PF(S)=\{F\}$ and pseudo-symmetric if $PF(S)=\{\frac{F}{2},F\}$. This shows that a GNS is symmetric if and only if it has $t(S)=1$. The authors of \cite{Irr} also show that every GNS can be written as a finite intersection of irreducible GNS.

By Theorem \ref{Frobenius allowable} we know that Frobenius allowable gaps of $S$ are maximal in $\Hs(S)$ under the natural partial ordering. Therefore they are all pseudo-Frobenius gaps. We denote the collection of all Frobenius allowable gaps of $S$ by $FA(S)$ and number of Frobenius allowable gaps by $\tau(S)$. Therefore
$$\tau(S)=|FA(S)| =\#\{x\in\Hs(S)\mid x\text{ is Frobenius allowable}\}.$$
Since all Frobenius allowable gaps are Pseudo-Frobenius, we see that $\tau(S)\leq t(S)$. We call a GNS $S$ quasi-symmetric if $\tau(S)=t(S)$.
In the case of Frobenius GNS, $\tau(S)=1$. Hence a Frobenius GNS $S$ will be quasi-symmetric when $t(S)=1$. Thus the property of being symmetric is equivalent to being both quasi-symmetric and a Frobenius GNS. The notions of symmetry and quasi-symmetry of course coincide in the case of numerical semigroups.
In Theorem \ref{tau=t} we show that quasi-symmetric GNS are characterised by a property similar to that of symmetric numerical semigroups.
\begin{customthm}{\ref{tau=t}}
Given a GNS $S\subseteq\Nd$, $\tau(S)\leq t(S)$. Moreover equality holds if and only if $S$ satisfies the property that for every $x\in\Hs(S)$ there is some Frobenius allowable gap $F$ for which $F-x\in S$.
\end{customthm}

We call a GNS $S$ quasi-irreducible if for every gap $x\in\Hs(S)$ either $2x$ is Frobenius allowable or there is some $F\in FA(S)$ for which $F-x\in S$. From Theorem \ref{tau=t}, we see that all quasi-symmetric GNS are quasi-irreducible. In Theorem \ref{quasi irreducible} and Proposition \ref{quasi-irreducible PF} we prove the following.
\begin{theorem}\label{char Irr}
Given a GNS $S$, the following are equivalent:
\begin{itemize}
    \item $S$ is quasi-irreducible, i.e. for every gap $x\in\Hs(S)$ either $2x\in FA(S)$ or there is some $F\in FA(S)$ for which $F-x\in S$.
    \item $S$ is maximal in the collection of all GNS $S'$ for which $FA(S)\subseteq \Hs(S')$.
    \item For all $P\in PF(S)$, either $P\in FA(S)$ or $2P\in FA(S)$.
\end{itemize}
\end{theorem}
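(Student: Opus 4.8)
Write the three listed conditions as (1) ($S$ is quasi-irreducible), (2) ($S$ is maximal among GNS $S'$ with $FA(S)\subseteq\Hs(S')$), and (3) (every $P\in PF(S)$ satisfies $P\in FA(S)$ or $2P\in FA(S)$). The plan is to prove the cycle (1)$\Rightarrow$(3)$\Rightarrow$(2)$\Rightarrow$(1), using throughout Theorem~\ref{Frobenius allowable}, so that $FA(S)$ is precisely the set of maximal gaps and in particular $FA(S)\subseteq PF(S)$. For (1)$\Rightarrow$(3): given $P\in PF(S)$, apply quasi-irreducibility to the gap $P$; if $2P\notin FA(S)$ there is $F\in FA(S)$ with $F-P\in S$, and either $F=P$, giving $P\in FA(S)$, or $F-P\in S\setminus\{0\}$, whence $F=P+(F-P)\in S$ by the pseudo-Frobenius property of $P$, contradicting $F\in\Hs(S)$. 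For (3)$\Rightarrow$(2): if a GNS $S'\supsetneq S$ satisfied $FA(S)\subseteq\Hs(S')$, choose $y$ maximal in the nonempty finite set $\Hs(S)\cap S'$; for every nonzero $s\in S$, $y+s\in S'$, and $y+s$ cannot be a gap of $S$ (it would exceed $y$ in $\Hs(S)\cap S'$), so $y\in PF(S)$; by (3), either $y\in FA(S)\subseteq\Hs(S')$, contradicting $y\in S'$, or $2y\in FA(S)\subseteq\Hs(S')$, contradicting $2y\in S'$.

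For (2)$\Rightarrow$(1), the substantial direction, suppose (1) fails; equivalently, the set
$$B=\{x\in\Hs(S)\mid 2x\notin FA(S)\text{ and }F-x\notin S\text{ for every }F\in FA(S)\}$$
is nonempty. Fix $z$ maximal in $B$ under the natural partial order. I would establish two claims. \textbf{(A):} $z\in PF(S)$. If not, pick $s\in S\setminus\{0\}$ with $z+s\in\Hs(S)$, let $k^{*}$ be the largest $k$ with $z+ks$ a gap (finite, since $\Hs(S)$ is), and set $z^{*}=z+k^{*}s$; then $z^{*}\in B$, because $F-z^{*}\in S$ would force $F-z=(F-z^{*})+k^{*}s\in S$, while $2z^{*}\in FA(S)$ would force (via $z\in B$) $z+2k^{*}s=2z^{*}-z\notin S$, impossible as $2k^{*}>k^{*}$. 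But $z^{*}>z$ contradicts maximality of $z$.

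\textbf{(B):} $kz\notin FA(S)$ for all $k\geq1$. The cases $k=1,2$ are immediate from $z\in B$ (take $F=z$ or $F=2z$). For $k\geq3$, suppose $kz\in FA(S)$. Then $z,2z,\dots,(k-1)z$ are all gaps: if some $jz$ with $1\le j\le k-1$ lay in $S$, then for the largest such $j$ we would get $(j+1)z=z+jz\in S$ by (A), contradicting either the maximality of $j$ or $kz\notin S$. Now $(k-1)z\in B$: first, $2(k-1)z=(2k-2)z\notin FA(S)$, since otherwise $(2k-2)z$ would be a gap exceeding the maximal gap $kz$; second, $F-(k-1)z\in S$ for some $F\in FA(S)$ is impossible, because repeatedly adding $z$ — legitimate by the pseudo-Frobenius property, as each intermediate term $F-jz$ ($1\le j\le k-1$) is a nonzero element of $S$, $jz$ being a non-maximal gap — would produce $F\in S$. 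Since $(k-1)z>z$, this contradicts maximality of $z$. Granting (A) and (B), the submonoid $S'=S+\langle z\rangle=S\cup\{kz\mid k\ge1\}$ (translates $s+kz$ with $s\ne0$ already lie in $S$ by (A)) is a GNS that properly contains $S$ because $z$ is a gap, and $\Hs(S')=\Hs(S)\setminus\{kz\mid k\ge1\}\supseteq FA(S)$ by (B) — contradicting (2).

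The main obstacle is precisely claims (A) and (B): each amounts to pushing a translate $z+k^{*}s$, or a multiple $kz$, out of the finite complement and then running the pseudo-Frobenius property backwards to reach a point forced to be both a gap and in $S$; the delicate point in (B) is that one must exclude $kz\in FA(S)$ for \emph{every} $k\ge1$, not merely $k\in\{1,2\}$, since otherwise the enlargement $S+\langle z\rangle$ could absorb a maximal gap. As a consistency check, for $d=1$ one has $FA(S)=\{F(S)\}$, and conditions (1), (2), (3) specialize respectively to the third, second, and fourth characterizations of irreducible numerical semigroups recalled in the introduction, so the equivalence recovers the classical one.
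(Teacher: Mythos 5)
Your proof is correct, and for the key implication it takes a genuinely different route from the paper. The implications (1)$\Rightarrow$(3) and (3)$\Rightarrow$(2) match the paper's Proposition \ref{quasi-irreducible PF} almost verbatim (the paper phrases the latter contrapositively, extracting a pseudo-Frobenius gap as a maximal point of $S'\setminus S$, exactly as you do). The difference is in (2)$\Rightarrow$(1). The paper (Theorem \ref{quasi irreducible}) takes a maximal bad gap $x$ and adjoins only that single point, so it must verify that $S\cup\{x\}$ is closed under addition; this forces a separate proof that $2x\in S$, carried out by a further case analysis involving $4x\in FA(S)$ and $3x$. You instead adjoin the entire cyclic semigroup $\{kz\mid k\geq 1\}$ generated by the maximal bad gap $z$, which trades the paper's closure casework for your Claim (B) that no multiple $kz$ lies in $FA(S)$ — needed so that the enlarged monoid still avoids $FA(S)$. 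Both claims (A) and (B) check out: in (A) the translate $z+k^*s$ genuinely lands in $B$ because $2(z+k^*s)\in FA(S)$ would make $z+2k^*s$ a gap beyond $k^*$, and in (B) the chain $F-(k-1)z, F-(k-2)z,\dots$ stays in $S\setminus\{0\}$ because each $jz$ is a non-maximal gap (dominated by $kz$), so $F\neq jz$. Your approach buys a cleaner closure verification at the cost of the quantifier over all $k\geq 1$ in (B); the paper's buys a smaller enlargement at the cost of the $2x\in S$ casework. Your cycle structure also silently absorbs the paper's preliminary step (via the auxiliary $S_1$) showing $FA$ of a maximal member equals the prescribed anti-chain, which is unnecessary here since the anti-chain is $FA(S)$ itself.
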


We also study the bounds on the type of a Frobenius GNS.
For numerical semigroups it is known that
$$\frac{g(S)}{F(S)+1-g(S)}\leq t(S)\leq 2g(S)-F(S).$$
The first inequality was proved in \cite{Type}, and the second in \cite{H Nari}. Numerical semigroups that satisfy $t(S)=2g(s)-F(S)$ are called almost-symmetric.
In \cite{Almost Irr}, the authors extended the second inequality. They prove that if $S$ is a Frobenius GNS then
$$t(S)\leq 2g(S)+1-\|F(S)\|.$$ 
If equality holds then they call Frobenius GNS almost-symmetric. The authors of \cite{Almost Irr} come up with a number of equivalent conditions for a Frobenius GNS to be almost-symmetric. We give another property that is equivalent to almost-symmetry.
The special case of Proposition \ref{Condition almost sym} for numerical semigroups was proved in \cite{My paper type}.
\begin{customprop}{\ref{Condition almost sym}}
A Frobenius GNS $S$ with Frobenius gap $F$ is almost symmetric if and only if
$$T(S)=\{x\in\Nd\mid F-x\in \left(\left(\mathbb{Z}^d\setminus S\right)\cup\{0\}\right)\}$$
is a GNS.
\end{customprop}
We also extend the first inequality as follows:
\begin{customthm}{\ref{lower bound for type}}
Given a Frobenius GNS $S\subseteq\Nd$ we have
$$\frac{g(S)}{\|F(S)\|-g(S)}\leq t(S).$$
\end{customthm}

\section{Frobenius Allowable Gaps}

In this section we will prove Theorem \ref{Frobenius allowable}. We fix a GNS $S\subseteq\Nd$ and an element $h$ of $\Hs(S)$, which is maximal under the natural partial ordering. We construct an explicit relaxed monomial order with respect to which $h$ becomes the Frobenius gap of $S$.

\begin{theorem}\label{Frobenius allowable}
Given a GNS $S\subseteq\Nd$ the Frobenius allowable gaps of $S$ are precisely the maximal elements of $\Hs(S)$ under the natural partial ordering.
\end{theorem}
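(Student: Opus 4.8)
The plan is to fix a maximal gap $h\in\Hs(S)$ (maximal under the natural partial ordering) and construct an explicit relaxed monomial order $\prec$ on $\Nd$ for which $h=\max_{\prec}\Hs(S)$. The one direction of the equivalence is already observed in the excerpt: any Frobenius allowable gap is automatically a maximal element of $\Hs(S)$ under $\leq$, because a relaxed monomial order refines $\leq$ by axiom (i) applied with $w=v+u$. So all the work is in the reverse direction: given $h$ maximal, exhibit an order witnessing that $h$ is Frobenius allowable.

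The key idea I would use is to build $\prec$ in two "layers." First, single out the finite box $B=\{x\in\Nd\mid 0\le x\le h\}$ together with $h$ itself, and more importantly the finitely many gaps of $S$. I want every gap of $S$ other than $h$ to come before $h$, and I want $h$ to come before every element of $S$ that is $\succ$-above it. A clean way: choose a linear functional $\lambda:\Nd\to\N$ of the form $\lambda(x)=\sum_i c_i x^{(i)}$ with strictly positive integer coefficients $c_i$ chosen large/generic enough that $\lambda$ is injective on the finite set $\Hs(S)\cup\{h\}$ and, crucially, such that $\lambda(g)<\lambda(h)$ for every gap $g\ne h$. The latter is the only nontrivial requirement, and this is where maximality of $h$ is used: since $h$ is maximal, no gap $g$ satisfies $g\ge h$, so for each gap $g\ne h$ there is a coordinate $i$ with $g^{(i)}<h^{(i)}$; a weighting that puts overwhelming weight on a suitable coordinate, or a careful inductive/greedy choice of the $c_i$ (weight coordinate $d$ hugely, then $d-1$ hugely relative to the rest, etc., i.e. a "graded lexicographic"-type weighting), forces $\lambda(g)<\lambda(h)$. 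I would then define $v\prec w$ primarily by comparing $\lambda(v)$ and $\lambda(w)$, breaking ties by any fixed monomial order (e.g. lexicographic) on $\Nd$. One must check this is a relaxed monomial order: axiom (ii) holds since $\lambda(v)>0=\lambda(0)$ for $v\ne 0$; axiom (i) holds because $\lambda(v+u)\ge\lambda(v)$ with equality only when $u=0$ (as all $c_i>0$), so $v\prec w\Rightarrow \lambda(v)\le\lambda(w)$, and then adding $u$ only increases $\lambda$, and in the tie-case the auxiliary monomial order handles it. Finally, with this $\prec$, every gap $g\ne h$ has $\lambda(g)<\lambda(h)$ hence $g\prec h$, so $h=\max_\prec\Hs(S)=F_\prec(S)$, proving $h$ is Frobenius allowable.

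The main obstacle I anticipate is the simultaneous satisfaction of two competing demands on $\lambda$: it must be compatible with the partial order (forcing $c_i>0$, the "monomial" part) and it must rank $h$ above all other gaps. These are not automatically compatible for an arbitrary positive functional — for instance if $h=(0,5)$ and $g=(3,0)$ with $g$ also a gap, then any $\lambda$ with large $c_1$ fails. The resolution is that maximality of $h$ only guarantees $g\not\ge h$, not $g\le h$, so a single linear functional may genuinely be insufficient, and one may instead need to compare vectors by a sequence of functionals or by first comparing a cleverly chosen coordinate. Concretely I expect the robust construction is: reorder coordinates so that we handle them one at a time, and define $\prec$ so that $h$'s "profile" is declared maximal among the finitely many gaps by brute force on the finite set, then extend consistently to all of $\Nd$. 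The delicate point to get right is verifying axiom (i) globally after this finite-set manipulation — i.e. checking that the extension to all of $\Nd$ remains translation-monotone — and I would organize the proof around first defining $\prec$ on the box $B$ (or on $\Hs(S)\cup S\cap B$), checking the ordering relations we need there, and then gluing on a standard monomial order outside $B$ in a way that respects both axioms.

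\begin{proof}[Proof sketch]
One direction is immediate from the axioms: if $F$ is Frobenius allowable via $\prec$ and $g\in\Hs(S)$ with $g\ge F$, writing $g=F+u$ gives, by axiom (i), that $F\prec g$ unless $u=0$; since $F$ is the $\prec$-largest gap, $u=0$, so $F$ is maximal under $\leq$. For the converse, fix $h\in\Hs(S)$ maximal under $\leq$ and construct $\prec$ as described above: choose positive integer weights $c_1,\dots,c_d$, chosen greedily coordinate-by-coordinate, so that the functional $\lambda(x)=\sum_i c_i x^{(i)}$ satisfies $\lambda(g)<\lambda(h)$ for every gap $g\ne h$ (possible because $h$ maximal means no gap dominates it, though one may need to replace the single functional by a finite hierarchy of functionals comparing coordinates in turn), and define $v\prec w$ by comparing $\lambda$ first and breaking ties lexicographically. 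Then $\prec$ satisfies axioms (i) and (ii), and $h\succ g$ for all gaps $g\ne h$, so $F_\prec(S)=h$.
\end{proof}
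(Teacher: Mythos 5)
There is a genuine gap in the converse direction: neither your primary construction nor your proposed fallbacks actually work, and the object that does work is qualitatively different from anything you describe. A single positive linear functional $\lambda(x)=\sum_i c_ix^{(i)}$ with $\lambda(g)<\lambda(h)$ for all gaps $g\neq h$ need not exist. Take the paper's own example $S=\mathbb{N}^2\setminus\{(0,1),(0,2),(0,3),(1,0),(2,0),(3,0),(1,1)\}$ with $h=(1,1)$: you would need $\lambda((3,0))<\lambda((1,1))$ and $\lambda((0,3))<\lambda((1,1))$, i.e.\ $2c_1<c_2$ and $2c_2<c_1$, which is impossible for $c_1,c_2>0$. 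You anticipate this and suggest instead ``a sequence of functionals comparing coordinates in turn,'' but a hierarchy of linear functionals compared lexicographically is exactly a \emph{monomial} order (this is Robbiano's classification, cited in the paper), and the paper shows that for this very $S$ no monomial order has $(1,1)$ as its largest gap. So that escape route is provably closed, not merely delicate. Your final fallback --- brute-force ordering on the finite set of gaps and then ``extending consistently'' --- is precisely the step where the entire content of the theorem lives, and you leave it unexecuted while correctly flagging that verifying axiom (i) globally is the hard part.

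The missing idea is to abandon linearity. The paper uses the function $\phi(x)=\min_{1\leq i\leq k}\bigl(x^{(i)}/h^{(i)}\bigr)$, the minimum taken over the coordinates where $h^{(i)}>0$, and orders by $\phi$ first with a lexicographic tie-break. This $\phi$ is only \emph{weakly} monotone under translation ($\phi(w)\leq\phi(w+u)$), which is exactly enough for a \emph{relaxed} monomial order though not for a monomial order --- this is why the relaxed notion is the right one for the theorem. Maximality of $h$ gives, for each gap $g\neq h$, a coordinate $i$ with $g^{(i)}<h^{(i)}$ (hence $h^{(i)}>0$), so $\phi(g)<1=\phi(h)$ and $g\prec h$; the concavity of the minimum is what reconciles the ``competing demands'' that defeat any single linear $\lambda$. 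Your first direction (Frobenius allowable $\Rightarrow$ maximal) is fine: from axiom (i) one gets $w\preceq w+u$ for all $u$, since $w+u\prec w$ would yield $w+u\prec w+u$.
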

\begin{proof}
Let $h$ be a maximal element of $\Hs(S)$ under the natural partial ordering. We reorder the coordinates so that $h^{(1)},h^{(2)},\dots,h^{(k)}$ are all non zero and $h^{(k+1)},h^{(k+2)},\dots,h^{(d)}$ are zero. We then define a function $\phi$ on $\mathbb{N}^d$:
$$\phi(x)=\min_{1\leq i\leq k}\left(\frac{x^{(i)}}{h^{(i)}}\right).$$
We now define $\prec$ as follows, suppose $x,y\in \Nd$
\begin{itemize}
    \item If $\phi(x)<\phi(y)$ then $x\prec y$.
    \item If $\phi(x)=\phi(y)$ and there is a $j\in\{0,1\dots,d-1\}$ such that $x^{(i)}=y^{(i)}$ for $1\leq i\leq j$ and $x^{(j+1)}<y^{(j+1)}$ then $x\prec y$.
\end{itemize}
This is clearly a total ordering. It is also clear that $\phi(h)=1$. Moreover, given some $x\in G(S)$ other than $h$, we know that $h\not\leq x$ as $h$ is maximal in $G(S)$. This means that there is a $i$ for which $x^{(i)}<h^{(i)}$. In this case $h^{(i)}>0$ and $i\leq k$. It follows that $\frac{x^{(i)}}{h^{(i)}}<1$ and hence $\phi(x)<1=\phi(h)$. Therefore $x\prec h$. This shows that $h$ is the maximum of $G(S)$ with respect to $\prec$. The only thing that remains to be shown is that $\prec$ is a relaxed monomial order.

We know that $\phi(0)=0$. If $v\in\mathbb{N}^d$ is non-zero then there is some $j$ for which $v^{(j)}>0$. Consider the smallest such $j$. If $j\leq k$ then $\phi(0)=0<\phi(v)$ and hence $0\prec v$. On the other hand if $j>k$ then $\phi(0)=0=\phi(v)$. For $1\leq i\leq j-1$ we have $0^{(i)}=0=v^{(i)}$ and $0^{(j)}=0<v^{(j)}$. Therefore we still get $0\prec v$.

Next suppose we have $u,v,w\in\mathbb{N}^d$ such that $v\prec w$. We know that $w^{(i)}\leq (w+u)^{(i)}$ for each $i$. Moreover this implies that $\phi(w)\leq\phi(w+u)$. Combining all of this we see that $w\preccurlyeq w+u$ and hence $v\prec w+u$.

Therefore $\prec$ is indeed a relaxed monomial ordering and $F_{\prec}=h$. We see that $h$ is Frobenius allowable and this completes the proof.
\end{proof}

We also have a notion of a monomial order which is stronger than a relaxed monomial order.

\begin{definition}
A total order $\prec$, on the elements of $\mathbb{N}^d$ is called a monomial order if it satisfies:\\
i) If $v,w\in\mathbb{N}^d$ and if $v\prec w$ then $v+u\prec w+u$ for any $u\in\mathbb{N}^d$.\\
ii) If $v\in\mathbb{N}^d$ and $v\neq 0$ then $0\prec v$.
\end{definition}

It is clear that all monomial orders are also relaxed monomial orders. However as noted in \cite{Irr}, the converse is not true. In particular the relaxed monomial order we constructed in the proof of Theorem \ref{Frobenius allowable} is not a monomial order. To see this consider the case when $d=2$, $h=(1,1)$. In this case $(1,4)\prec (2,2)$ but $(1,4)+(2,0)\succ (2,2)+(2,0)$. So it is not a monomial order. Of course it is a relaxed monomial order, so we have $(1,4)\prec (2,2)+(2,0)$.

In \cite{Robbiano}, the author proved that a general monomial order on $\Nd$ can be obtained in terms of $d$ linearly independent vectors in $\mathbb{R}^d$ as follows. Given a monomial order $\prec$ there are $v_1,v_2,\dots,v_d\in\mathbb{R}^d$, that are linearly independent. Moreover for $v,w\in\Nd$ we have $v\prec w$ if and only if there is $k\in\{1,2,\dots d-1\}$ such that for every $i$ with $1\leq i\leq k-1$ we have $\langle v,v_i\rangle=\langle w,v_i\rangle$ and $\langle v,v_k\rangle<\langle w,v_k\rangle$.

Given a GNS $S\subseteq\Nd$ and a Frobenius allowable gap $F$ one could ask if there is a monomial order $\prec$ such that $F=F_{\prec}$. This is not always the case. For example let $d=2$ and consider $$S=\mathbb{N}^2\setminus\{(0,1),(0,2),(0,3),(1,0),(2,0),(3,0),(1,1)\}.$$ This is closed under addition and hence is a GNS. The gap $(1,1)$ is maximal among the gaps in the natural partial ordering. So $(1,1)$ is Frobenius allowable. However there is no monomial order $\prec$ for which $F_{\prec}=(1,1)$.

Recall that a GNS is called a Frobenius GNS if its Frobenius gap is independent of the relaxed monomial ordering. Theorem \ref{Frobenius allowable} allows us to classify which GNS are Frobenius GNS.

\begin{theorem}
Given a GNS $S$, the following are equivalent:\\
i) $S$ is a Frobenius GNS\\
ii) $\Hs(S)$ has a unique maximal element with respect to the natural partial ordering.\\
iii) $PF(S)$ has a unique maximal element with respect to the natural partial ordering.\\
\end{theorem}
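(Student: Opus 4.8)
The plan is to obtain the equivalence (i) $\Leftrightarrow$ (ii) directly from Theorem \ref{Frobenius allowable}, and then to establish (ii) $\Leftrightarrow$ (iii) by proving that $\Hs(S)$ and $PF(S)$ have exactly the same maximal elements with respect to the natural partial ordering. Throughout we may assume $S\neq\Nd$, so that $\Hs(S)$ is finite and nonempty and all three statements concern a genuine Frobenius gap.

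First I would argue (i) $\Leftrightarrow$ (ii). By definition $S$ is a Frobenius GNS exactly when $F_\prec(S)=\max_\prec\Hs(S)$ is the same for every relaxed monomial order $\prec$. Theorem \ref{Frobenius allowable} identifies the set $\{F_\prec(S):\prec\text{ a relaxed monomial order}\}$ with the set of maximal elements of $\Hs(S)$ under the natural partial ordering. Since $\Hs(S)$ is finite and nonempty this set is nonempty, so it is a singleton if and only if $\Hs(S)$ has a unique maximal element, which is precisely (ii).

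For (ii) $\Leftrightarrow$ (iii) the key observation is the chain
\[
\{\text{maximal elements of }\Hs(S)\}\ \subseteq\ PF(S)\ \subseteq\ \Hs(S).
\]
The right inclusion is the definition of $PF(S)$. For the left one, if $h$ is maximal in $\Hs(S)$ and $x\in S\setminus\{0\}$, then $h+x\geq h$ and $h+x\neq h$, so by maximality $h+x\notin\Hs(S)$, i.e.\ $h+x\in S$; hence $h\in PF(S)$. I would then show that the maximal elements of $PF(S)$ coincide with those of $\Hs(S)$: a maximal element of $\Hs(S)$ lies in $PF(S)$ by the inclusion and remains maximal there since $PF(S)\subseteq\Hs(S)$; conversely, if $p$ is maximal in $PF(S)$ then, using finiteness of $\Hs(S)$, there is a maximal element $h$ of $\Hs(S)$ with $p\leq h$, and $h\in PF(S)$ forces $p=h$, so $p$ is maximal in $\Hs(S)$. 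Hence $\Hs(S)$ has a unique maximal element if and only if $PF(S)$ does, which is (ii) $\Leftrightarrow$ (iii).

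I do not expect a genuine obstacle here: the whole argument rests on Theorem \ref{Frobenius allowable} together with the elementary fact that every maximal gap is pseudo-Frobenius. The only step needing a little care is checking that $PF(S)$ and $\Hs(S)$ share their maximal elements, which follows from the displayed inclusions plus the remark that in the finite poset $\Hs(S)$ every element lies below some maximal element.
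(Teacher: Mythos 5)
Your proof is correct and follows essentially the same route as the paper: Theorem \ref{Frobenius allowable} gives (i) $\Leftrightarrow$ (ii), and the observation that every maximal gap is pseudo-Frobenius, combined with $PF(S)\subseteq\Hs(S)$, shows the two sets share the same maximal elements, giving (ii) $\Leftrightarrow$ (iii). You simply spell out the last step (maximal elements of $PF(S)$ equal those of $\Hs(S)$) in more detail than the paper does.
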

\begin{proof}
By Theorem \ref{Frobenius allowable} we know that i) and ii) are equivalent. We have also seen that the maximal members of $\Hs(S)$ under the natural partial ordering are pseudo-Frobenius gaps. This means that the maximal members of $\Hs(S)$ and $PF(S)$ under the natural partial ordering are exactly the same. This shows that ii) and iii) are equivalent.
\end{proof}

\section{Quasi-irreducible GNS}

Recall that the type of a GNS is the number of pseudo-Frobenius gaps it has i.e.
$$t(S)=|PF(S)|=\#\{P\in\Hs(S)\mid P+(S\setminus\{0\})\subseteq S \}.$$
And $\tau(S)$ is the number of Frobenius allowable gaps of $S$. Since all Frobenius allowable gaps are pseudo-Frobenius, we have $\tau(S)\leq t(S)$. We start this section by characterising quasi-symmetric GNS, i.e. those GNS for which $\tau(S)=t(S)$.

\begin{theorem}\label{tau=t}
Given a GNS $S\subseteq\Nd$, $\tau(S)\leq t(S)$. Moreover equality holds if and only if $S$ satisfies the property that for every $x\in\Hs(S)$ there is some Frobenius allowable gap $F$ for which $F-x\in S$.
\end{theorem}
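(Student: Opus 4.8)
The plan is as follows. The inequality $\tau(S)\le t(S)$ is already in hand: Theorem~\ref{Frobenius allowable} identifies $FA(S)$ with the set of maximal elements of $\Hs(S)$ in the natural partial order, and if $h$ is such a maximal gap then $h+s>h$ for every $s\in S\setminus\{0\}$, so $h+s\notin\Hs(S)$ and $h\in PF(S)$; thus $FA(S)\subseteq PF(S)$. Since $\Hs(S)$ is finite, $\tau(S)=t(S)$ is equivalent to the set equality $FA(S)=PF(S)$, and it is this that I would characterise.

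The key step is to describe $PF(S)$ as a set of maximal elements for a second partial order. Define $a\leq_S b$ iff $b-a\in S$ (in particular $b-a\in\Nd$). I would first verify this is a partial order on $\Nd$: reflexivity is $0\in S$, transitivity is closure of $S$ under addition, and antisymmetry holds because $S\subseteq\Nd$ forces $S\cap(-S)=\{0\}$. Unwinding the definitions, a gap $P\in\Hs(S)$ fails to be pseudo-Frobenius exactly when $P+s\in\Hs(S)$ for some $s\in S\setminus\{0\}$, i.e.\ exactly when $P$ is not maximal in $(\Hs(S),\leq_S)$. Hence $PF(S)$ is precisely the set of $\leq_S$-maximal elements of $\Hs(S)$.

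Granting this, both directions of the ``moreover'' are short. Suppose first that every $x\in\Hs(S)$ admits some $F\in FA(S)$ with $F-x\in S$, and take $P\in PF(S)$ together with such an $F$ for $x=P$. If $F\ne P$ then $F-P\in S\setminus\{0\}$, and since $P$ is pseudo-Frobenius we get $F=P+(F-P)\in S$, contradicting $F\in\Hs(S)$. So $P=F\in FA(S)$, giving $PF(S)\subseteq FA(S)$, hence equality and $\tau(S)=t(S)$. Conversely, suppose $FA(S)=PF(S)$ and fix $x\in\Hs(S)$. The up-set $\{y\in\Hs(S)\mid x\leq_S y\}$ is finite and nonempty, so it contains an element $P$ that is $\leq_S$-maximal, and such a $P$ is maximal in all of $(\Hs(S),\leq_S)$; thus $P\in PF(S)=FA(S)$, and $F:=P$ satisfies $F-x\in S$.

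The only genuine content here is the poset description of $PF(S)$; everything else is bookkeeping. Accordingly, the ``main obstacle'' is not a hard step but simply being careful about two easy points: that $\leq_S$ is genuinely antisymmetric (this is where $S\subseteq\Nd$ is used), and that a finite poset always has a maximal element dominating any prescribed element — which is exactly what lets us pass from an arbitrary gap $x$ to a Frobenius allowable gap $F$ with $F-x\in S$.
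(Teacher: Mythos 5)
Your proof is correct. The inequality and the forward direction ($PF(S)\subseteq FA(S)$ under the stated property) are essentially identical to the paper's argument: both hinge on the observation that if $P$ is pseudo-Frobenius and $F-P\in S\setminus\{0\}$ for some gap $F$, then $F\in S$, a contradiction. Where you genuinely diverge is the converse. The paper argues by contradiction: it collects the gaps $x$ admitting no $F\in FA(S)$ with $F-x\in S$ into a set $A$, takes an element of $A$ maximal under the \emph{natural} partial order, uses $\tau(S)=t(S)$ to produce $s\in S\setminus\{0\}$ with $x+s\in\Hs(S)$, and then pulls the property back from $x+s$ to $x$ — effectively climbing one step at a time. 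You instead introduce the order $a\leq_S b\iff b-a\in S$, prove the clean structural fact that $PF(S)$ is exactly the set of $\leq_S$-maximal gaps, and then jump directly from an arbitrary gap $x$ to a $\leq_S$-maximal gap $P$ above it, which by hypothesis lies in $FA(S)$ and satisfies $P-x\in S$. Your route is direct rather than by contradiction, and the poset description of $PF(S)$ is a reusable lemma (it also immediately gives $FA(S)\subseteq PF(S)$, since $b-a\in S\setminus\{0\}$ forces $a<b$ in the natural order). The paper's version buys slightly less machinery — no need to verify that $\leq_S$ is a partial order — but your argument isolates the real content more transparently. All the small points you flag (antisymmetry of $\leq_S$ from $S\subseteq\Nd$, existence of a maximal element above $x$ in a finite poset) are handled correctly.
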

\begin{proof}
We already know that $\tau(S)\leq t(S)$. We now prove the next part of the theorem.
First suppose that $S$ satisfies the given property. In this case consider some $x\in\Hs(S)$ is not Frobenius allowable. We know that there must be some Frobenius allowable gap $F$ for which $F-x\in S$. We know that $F\neq x$ as $x$ is not Frobenius allowable. Therefore $F-x$ is a nonzero element of $S$ and $x+(F-x)\not\in S$. This shows that $x$ is not a pseudo-Frobenius gap of $S$. We can conclude that $\tau(S)=t(S)$.

We now prove the other direction. Suppose that $\tau(S)=t(S)$. Assume for the sake of contradiction that $S$ does not satisfy the given property. Consider the set of all gaps that fail the property
$$A=\{x\in\Hs(S)\mid \not\exists F\in FA(S): F-x\in S\}.$$
Let $x$ be a maximal member of $A$ under the natural partial ordering. We know that $x$ is not Frobenius allowable, since $x-x=0\in S$. Moreover since $\tau(S)=t(S)$, we know that $x\not\in PF(S)$. This means that there is some nonzero element $s\in S$ such that $x+s\not\in S$. By the maximality of $x$ we know that $x+s\not\in A$. Now $x+s\in\Hs(S)$, $x+s\not\in A$, so we see that for every Frobenius-allowable gap $F$ of $S$, $F-(x+s)\not\in S$. This implies that $F-x\not\in S$ as $S$ is closed under addition. However this shows that $x\not\in A$, which is a contradiction. Therefore $A$ must be empty and $S$ satisfies the given condition.
\end{proof}

We note that if a GNS has type $1$ then it must have $\tau(S)=1$ i.e. it must be a Frobenius GNS. Moreover it must also be quasi-symmetric and hence must satisfy the condition of Theorem \ref{tau=t}. These GNS are studied in \cite{Irr} and are called symmetric GNS. A GNS is symmetric if and only if $\tau(S)=1$ and it is quasi-symmetric.

Recall that a GNS $S$ is called quasi-irreducible if for every $x\in\Hs(S)$ either $2x$ is Frobenius allowable or there is some Frobenius allowable gap $F$ for which $F-x\in S$.
Clearly all quasi-symmetric GNS are quasi-irreducible.

\begin{theorem}\label{quasi irreducible}
Let $D$ be a finite subset of $\Nd\setminus\{0\}$ that is an anti-chain with respect to the natural partial ordering. Consider the collection of all GNS $S\subseteq\Nd$ for which $D\subseteq \Hs(D)$. The maximal members of this collection are precisely the quasi-irreducible GNS $S$ with $FA(S)=D$.
\end{theorem}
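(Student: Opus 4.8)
Throughout I read the displayed condition as $D\subseteq\Hs(S)$, and I write $\mathcal{C}_D$ for the collection of GNS $S\subseteq\Nd$ with $D\subseteq\Hs(S)$. The plan is to prove the two set inclusions separately, after one preliminary reformulation of maximality. For a gap $x\in\Hs(S)$ the monoid $\langle S,x\rangle=\{s+nx\mid s\in S,\ n\geq 0\}$ generated by $S$ and $x$ is again a GNS properly containing $S$, and every GNS properly containing $S$ contains some $\langle S,x\rangle$ with $x\in\Hs(S)$. Hence $S$ is maximal in $\mathcal{C}_D$ if and only if $\langle S,x\rangle\cap D\neq\emptyset$ for every $x\in\Hs(S)$; equivalently, for each $x\in\Hs(S)$ there are $F\in D$, $s\in S$ and $n\geq 1$ with $F=nx+s$ (where $n\geq 1$ is automatic, since $F\in D\subseteq\Hs(S)$ is not in $S$). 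I would use this reformulation throughout.

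For the inclusion showing every quasi-irreducible GNS $S$ with $FA(S)=D$ is maximal in $\mathcal{C}_D$: first note $S\in\mathcal{C}_D$ because $D=FA(S)\subseteq\Hs(S)$. If some GNS $S'$ properly contained $S$ and still lay in $\mathcal{C}_D$, I would pick $x\in S'\setminus S\subseteq\Hs(S)$ and split on quasi-irreducibility: if $2x\in FA(S)=D$ then $2x\in\Hs(S')$ while $x+x\in S'$, a contradiction; if $F-x\in S$ for some $F\in D$ then $F=x+(F-x)\in S'$ while $F\in D\subseteq\Hs(S')$, again a contradiction. This is the short direction.

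For the reverse inclusion, let $S$ be maximal in $\mathcal{C}_D$. First I would show $FA(S)=D$, using that $FA(S)$ is the set of maximal gaps of $S$ (Theorem~\ref{Frobenius allowable}) and that $D$ is an anti-chain. If some $F\in D$ were not maximal, take a maximal gap $h\geq F$ with $h\neq F$; applying the reformulation to $h$ gives $F'\in D$ with $F'=s+nh\geq h>F$, so $F'>F$ with $F,F'\in D$, which is impossible. If some maximal gap $h$ were not in $D$, the reformulation applied to $h$ gives $F'\in D$ with $F'=s+nh\geq h$, and since $h$ is maximal and $F'\in D\subseteq\Hs(S)$ this forces $F'=h\in D$, again impossible. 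Hence $FA(S)=D$. Then I would prove quasi-irreducibility: fix $x\in\Hs(S)$ and let $n_0$ be least with $n_0x+s\in D$ for some $s\in S$, fixing $F=n_0x+s\in D$. If $n_0=1$ then $F-x=s\in S$. If $n_0\geq 2$, set $y=F-x=(n_0-1)x+s$; if $y\in S$ we are done, otherwise $y\in\Hs(S)$ and the reformulation applied to $y$ gives $F'\in D$, $t\in S$, $m\geq 1$ with $F'=my+t=m(n_0-1)x+(ms+t)$. Minimality of $n_0$ forces $m(n_0-1)\geq n_0$; combined with $ms+t\geq s$ this gives $F'\geq F$, hence $F'=F$ because $D$ is an anti-chain. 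Comparing the two expressions for $F=F'$ in $\mathbb{Z}^d$ then forces $m(n_0-1)=n_0$ and $(m-1)s+t=0$, which for $n_0\geq 2$ is possible only when $n_0=2$, $m=2$, $s=0$; so $F=2x\in D=FA(S)$, which is exactly what quasi-irreducibility requires.

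The $\langle S,x\rangle$ bookkeeping and the anti-chain comparisons are routine. The hard part will be the last step: handling $n_0\geq 2$ by feeding $y=F-x$ back into the maximality condition and using the anti-chain property to pin $n_0$ down to $2$ and $s$ down to $0$. That is the only place where any real care is needed; everything else is straightforward.
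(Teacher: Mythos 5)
Your proof is correct, and it takes a genuinely different route from the paper's for the harder direction (maximal $\Rightarrow$ quasi-irreducible with $FA(S)=D$). The paper argues by contradiction: it takes a maximal element $x$ of the set of gaps violating quasi-irreducibility and verifies directly, through several cases (e.g.\ $2(x+s)\in D$, $4x\in D$), that $S\cup\{x\}$ is closed under addition and lies in the collection, contradicting maximality; it establishes $FA(S)=D$ separately via the auxiliary semigroup $S_1=S\cup\{a\in\Nd\mid\forall F\in D:\ a\not\leq F\}$. You instead reformulate maximality through the adjoined monoid $\langle S,x\rangle=S+\N x$, which is closed under addition for free, so all closure verifications disappear: maximality becomes the statement that every gap $x$ satisfies $nx+s\in D$ for some $n\geq 1$, $s\in S$. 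Your derivation of $FA(S)=D$ from this is a clean anti-chain comparison, and your treatment of quasi-irreducibility via the minimal such $n_0$ --- feeding $y=F-x$ back into the same condition and forcing $n_0=2$, $m=2$, $s=t=0$ by writing $F'-F$ as a sum of vectors in $\Nd$ that must all vanish --- is sound; the two key points, $m(n_0-1)\geq n_0$ by minimality and $F'=F$ by the anti-chain property, both check out. What the paper's route buys is uniformity with the standard unitary-extension arguments for numerical semigroups; what yours buys is the elimination of the closure case analysis, at the cost of a short Diophantine computation at the end. The easy direction is essentially identical in both.
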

\begin{proof}
First suppose that we have a quasi-irreducible GNS $S$ with $FA(S)=D$. Assume for the sake of contradiction that $S$ is not maximal in the collection. This means that there is some GNS $S'\supsetneq S$ with $D\subseteq\Hs(S')$. Consider some $x\in S'\setminus S$. Since $x\in\Hs(S)$ we know that either $2x\in FA(S)=D$ or there is some $F\in D$ for which $F-x\in S$. We know that $2x\in S'$ so $2x$ cannot be in $D$. However if there is some $F\in D$ for which $F-x\in S$, then $F-x\in S'$ and hence $F=(F-x)+x\in S'$. This is also impossible. Therefore we get a contradiction and $S$ must be maximal in the collection.

We now prove the other direction, consider some GNS $S$ which is maximal in the collection. Let
$$S_1=S\cup \{a\in\Nd\mid \forall F\in D: a\not\leq F\}.$$
It is clear that $S_1$ is a GNS and $D\cap S_1=\emptyset$, so $S_1$ is in the collection. Also $S\subseteq S_1$, so the maximality of $S$ implies that $S=S_1$. Now the fact that $S=S_1$ and $D\subseteq\Hs(S)$ imply that $FA(S)\subseteq D$ (by Theorem \ref{Frobenius allowable}). Next consider some $x\in D$. We know that $x\in\Hs(S)$ so there must be some $F\in FA(S)$ for which $x\leq F$. But then $F,x\in D$. Since $D$ is an anti-chain, this implies $x=F$. Therefore $FA(S)=D$.

Next consider
$$X=\{x\in\Hs(S)\mid 2x\not\in D,\forall F\in D: F-x\not\in S\}.$$
If $X$ is empty then $S$ will be quasi-irreducible. Therefore assume for the sake of contradiction that $X$ is non-empty. Consider some $x\in X$ that is maximal with respect to the natural partial ordering. Let $$S_2=S\cup\{x\}.$$
We will show that $S_2$ is closed under addition. Consider a non-zero $s$ in $S$. By maximality of $x$, we know that $x+s\not\in X$. Therefore either $2(x+s)\in D$ or there is some $F\in D$ for which $F-(x+s)\in S$ or $x+s\in S$. We wish to show that $x+s\in S$, so we will show that the other two possibilities are impossible.
\begin{itemize}
    \item If $2(x+s)\in D$ then $x+2s=2(x+s)-x\not\in S$. Let $y=x+2s$. Then $2y=2(x+s)+2s>2(x+s)$, since $D$ is an anti-chain this means that $2y\not\in D$. Next if there is some $F\in D$ for which $F-y\in S$, then $F-x=F-y+2s\in S$ which is impossible since $x\in X$. Therefore $\forall F\in D$: $F-y\not\in S$. This means that $y\in X$ but $x<y$ and this contradicts the maximality of $x$.
    \item If there is some $F\in D$ for which $F-(x+s)\in S$. Then $F-x=F-(x+s)+s\in S$ and this contradicts the fact that $x\in X$.
\end{itemize}
Therefore we have shown that for any non-zero $s\in S$, $x+s$ is also an element of $S$. Next by the maximality of $x$ we also know that $2x\not\in X$. Therefore either $4x\in D$ or there is some $F\in D$ for which $F-2x\in S$ or $2x\in S$. We wish to show that $2x\in S$, so we will rule out the other two possibilities.
\begin{itemize}
    \item If $4x\in D$ then since $x\in X$ we know that $3x=4x-x\not\in S$. Since $D$ is an anti-chain and $4x\in D$ we know that $6x\not\in D$. Next if there is some $F\in D$ for which $F-3x\in S$, then $D$ being an anti-chain implies that $F\neq 3x,2x$. Then $F-3x$ is a non-zero element of $S$. By the previous casework this implies $F-2x=x+(F-3x)\in S$. Next $F-2x$ is a non-zero element of $S$ and hence by another application of it $F-x=x+(F-2x)\in S$. But this is impossible since $x\in X$. Therefore $\forall F\in D$: $F-3x\not\in S$. This means that $3x\in X$ and it contradicts the maximality of $x$.
    \item If there is some $F\in D$ for which $F-2x\in S$. Then firstly since $x\in X$ we know that $2x\not\in D$ so $F-2x\neq 0$. This means that $F-x=x+(F-2x)\in S$. But this contradicts the fact that $x\in X$.
\end{itemize}
We therefore conclude that $2x\in S$. This shows that $S_2$ is closed under addition and hence is a GNS. Since $\forall F\in D: F-x\not\in S$ we know that $\forall F\in D: F\neq x$. Therefore $S_2\cap D=\emptyset$. This means that $S_2$ is in the collection and $S\subsetneq S_2$. This contradicts the maximality of $S$ in the collection. Therefore $X$ must be empty and hence $S$ is quasi-irreducible with $FA(S)=D$.

\end{proof}

The special case of this theorem when $|D|=1$ was proved in \cite{Irr}, they call such GNS irreducible and study them. A GNS is irreducible if and only if $\tau(S)=1$ and it is quasi irreducible.
We now prove the second half of Theorem \ref{char Irr}. The special case of this when $\tau(S)=1$ was also proved in \cite{Irr}.

\begin{proposition}\label{quasi-irreducible PF}
A GNS $S$ is quasi-irreducible if and only if it satisfies the property that for every $P\in PF(S)$ either $P\in FA(S)$ or $2P\in FA(S)$.
\end{proposition}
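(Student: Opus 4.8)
The plan is to prove the two implications separately. The forward direction is a quick consequence of the definitions, while the reverse direction is the substantial part, and I would attack it with the same maximal-counterexample strategy used in the proof of Theorem~\ref{quasi irreducible}.

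For the forward direction, assume $S$ is quasi-irreducible and take $P\in PF(S)$. By definition of quasi-irreducibility, either $2P\in FA(S)$, and we are done, or there is $F\in FA(S)$ with $F-P\in S$. In the second case I would argue that $F-P$ must be $0$: otherwise $F-P$ is a nonzero element of $S$, so $P\in PF(S)$ forces $F=P+(F-P)\in S$, contradicting $F\in FA(S)\subseteq\Hs(S)$. Hence $P=F\in FA(S)$, and in every case $P\in FA(S)$ or $2P\in FA(S)$.

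For the reverse direction, assume every $P\in PF(S)$ satisfies $P\in FA(S)$ or $2P\in FA(S)$, and suppose for contradiction that the set
$$X=\{x\in\Hs(S)\mid 2x\notin FA(S)\ \text{and}\ F-x\notin S\ \text{for all}\ F\in FA(S)\}$$
is nonempty; pick $x\in X$ maximal under the natural partial ordering. The core claim is that $x\in PF(S)$. To prove it I would take a nonzero $s\in S$ with $x+s\notin S$ (if no such $s$ exists then $x\in PF(S)$ already) and note that $x+s$ is a gap lying strictly above $x$, hence outside $X$ by maximality; so either some $F\in FA(S)$ has $F-(x+s)\in S$, which yields $F-x=(F-x-s)+s\in S$ and contradicts $x\in X$, or $2(x+s)\in FA(S)$. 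In the latter case, applying $x\in X$ to $F=2(x+s)$ gives $x+2s=2(x+s)-x\notin S$, so $y:=x+2s$ is a gap; then $2y=2(x+s)+2s$ lies strictly above $2(x+s)\in FA(S)$, and since $FA(S)$ is an anti-chain (Theorem~\ref{Frobenius allowable}) this forces $2y\notin FA(S)$, while $F-y\in S$ for some $F\in FA(S)$ would give $F-x=(F-y)+2s\in S$, again contradicting $x\in X$. Thus $y\in X$ with $y>x$, contradicting maximality; so indeed $x\in PF(S)$. Finally the hypothesis applied to $x$ gives either $x\in FA(S)$, so that $F=x$ witnesses $F-x=0\in S$ and contradicts $x\in X$, or $2x\in FA(S)$, which contradicts $x\in X$ directly. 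In every case we reach a contradiction, so $X=\emptyset$ and $S$ is quasi-irreducible.

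The only delicate point, and the step I expect to be the main obstacle, is the construction of the strictly larger element $y=x+2s$ of $X$ in the reverse direction: one must organize the case analysis so that $y$ is simultaneously seen to be a gap, $2y$ is seen to escape $FA(S)$ (this is precisely where the description of $FA(S)$ as the anti-chain of maximal gaps from Theorem~\ref{Frobenius allowable} is used), and no $F\in FA(S)$ has $F-y\in S$ (here closure of $S$ under addition is used to translate the obstruction back to $x$). Everything else is routine bookkeeping with the definitions of $PF(S)$, $FA(S)$, and quasi-irreducibility.
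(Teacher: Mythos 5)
Your forward direction is the same as the paper's. Your reverse direction is correct but takes a genuinely different route. The paper argues by contradiction through Theorem \ref{quasi irreducible}: if $S$ is not quasi-irreducible it is not maximal among GNS whose gap sets contain $FA(S)$, so there is a GNS $S'\supsetneq S$ with $FA(S)\subseteq\Hs(S')$; a maximal element $P$ of $S'\setminus S$ is then shown to be pseudo-Frobenius (since $2P$ and $P+x$ land back in $S$ by maximality), and the hypothesis applied to $P$ forces $P\in FA(S)\subseteq\Hs(S')$, contradicting $P\in S'$. You instead work directly with the set $X$ of gaps violating quasi-irreducibility, take a maximal $x\in X$, and prove $x\in PF(S)$ by the same kind of case analysis ($y=x+2s$ is a strictly larger element of $X$, ruled out via the anti-chain property of $FA(S)$ and closure of $S$ under addition) that the paper uses inside the proof of Theorem \ref{quasi irreducible} itself; the hypothesis applied to $x$ then kills both remaining cases ($x\in FA(S)$ gives $F-x=0\in S$, and $2x\in FA(S)$ contradicts $x\in X$ outright). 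The trade-off: the paper's proof is shorter because it leans on the already-proved maximality characterization, while yours is self-contained and in fact slightly leaner than a full rerun of that theorem's argument, since you only need $x+s\in S$ for nonzero $s\in S$ (pseudo-Frobenius) and never need to verify that $S\cup\{x\}$ is closed under addition (in particular you can skip the $2x\in S$ casework). All steps check out, including the delicate point you flag: $2y=2(x+s)+2s$ strictly dominates $2(x+s)\in FA(S)$, so the anti-chain property of the maximal gaps (Theorem \ref{Frobenius allowable}) correctly excludes $2y\in FA(S)$.
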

\begin{proof}
First suppose that $S$ is quasi-irreducible. Consider some $P\in PF(S)$. Since $P\in\Hs(S)$, we know that either $2P\in FA(S)$ or there is some $F\in FA(S)$ for which $F-P\in S$. We have nothing to prove in the first case. So suppose that there is some $F\in FA(S)$ for which $F-P\in S$. If $P\neq F$, then $F-P$ is a non-zero element of $S$. Since $P\in PF(S)$, this would imply that $F=P+(F-P)\in S$. This is impossible. Therefore $P=F$, in particular $P\in FA(S)$.

Conversely suppose all pseudo Frobenius gap of $S$ satisfy the given property. Assume for the sake of contradiction that $S$ is not quasi-irreducible. We know that $FA(S)$ is an anti-chain in $\Nd$. Therefore by Theorem \ref{quasi irreducible} there is some $S'\supsetneq S$ for which $FA(S')=FA(S)$. Let $P$ be a maximal point of $S'\setminus S$ under the natural partial ordering. Since $S'$ is a GNS we know that $2P\in S'$. Moreover, by the maximality of $P$, we know that $2P\not\in (S'\setminus S)$. This means that $2P\in S$. Similarly given a non-zero $x\in S$, we know that $P+x\in S'$, but $P+x\not\in (S'\setminus S)$. Therefore $P+x\in S$. This means that $P$ is a pseudo Frobenius gap of $S$. Now $P$ must satisfy the given property, but we have seen that $2P\in S$. Therefore there is some $F\in FA(S)$ for which $F-P\in S$. However in this case $F-P\in S'$ and hence $F=P+(F-P)\in S'$. This contradicts the fact that $FA(S')=FA(S)$. We therefore conclude that $S$ must be quasi-irreducible.
\end{proof}

\begin{corollary}
If $S$ is a quasi-irreducible GNS, then
$$\tau(S)\leq t(S)\leq 2 \tau(S).$$
\end{corollary}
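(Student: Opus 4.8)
The plan is to use Proposition \ref{quasi-irreducible PF} to construct an at-most-two-to-one map from $PF(S)$ to $FA(S)$. The inequality $\tau(S)\le t(S)$ is already established, since every Frobenius allowable gap is pseudo-Frobenius, so the only thing to prove is the upper bound $t(S)\le 2\tau(S)$.

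First I would set up the map. Since $S$ is quasi-irreducible, Proposition \ref{quasi-irreducible PF} guarantees that every $P\in PF(S)$ satisfies $P\in FA(S)$ or $2P\in FA(S)$. Define $\psi\colon PF(S)\to FA(S)$ by $\psi(P)=P$ if $P\in FA(S)$ and $\psi(P)=2P$ otherwise. The proposition is exactly what makes $\psi$ well defined, i.e. ensures the value always lies in $FA(S)$.

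Next I would bound the fibers. Fix $F\in FA(S)$ and suppose $\psi(P)=F$. Then either $P=F$, or $P\notin FA(S)$ and $2P=F$; in the latter case $P=\frac{F}{2}$ is uniquely determined (and this branch contributes only when every coordinate of $F$ is even). Hence $\psi^{-1}(F)\subseteq\{F,\tfrac{F}{2}\}$, so $|\psi^{-1}(F)|\le 2$. Summing over $F\in FA(S)$ gives $t(S)=|PF(S)|=\sum_{F\in FA(S)}|\psi^{-1}(F)|\le 2|FA(S)|=2\tau(S)$, which together with $\tau(S)\le t(S)$ yields the claimed chain of inequalities.

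There is no real obstacle here: once Proposition \ref{quasi-irreducible PF} is available this is a short counting argument, and the only points needing care are that $\psi$ genuinely lands in $FA(S)$ (which the proposition provides) and that the two possible preimages $F$ and $\tfrac{F}{2}$ of a given $F$ exhaust $\psi^{-1}(F)$ (which is immediate from the definition of $\psi$).
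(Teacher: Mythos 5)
Your argument is correct and is exactly the counting argument the paper intends: the corollary is stated as an immediate consequence of Proposition \ref{quasi-irreducible PF}, and your two-to-one map $P\mapsto P$ or $P\mapsto 2P$ is the standard way to make that implication explicit. The fiber analysis (each $F\in FA(S)$ has preimage contained in $\{F,\tfrac{F}{2}\}$) is sound, so there is nothing to add.
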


In the case of Frobenius GNS the characterisation is actually a bit stronger. The authors of \cite{Irr} prove that if $S$ is a Frobenius GNS and at least one coordinate of its Frobenius gap is odd, then $S$ is irreducible if and only if $PF(S)=\{F(S)\}$. In this case the GNS is symmetric.
On the other hand if $S$ is a Frobenius GNS and all coordinates of its Frobenius gap are even, then $S$ is irreducible if and only if $PF(S)=\{\frac{F(S)}{2},F(S)\}$. In this case the GNS is pseudo-symmetric.
\begin{example}
One might wonder if this stronger characterisation can be extended to the case when $\tau(S)>1$. One might guess that if $S$ is a quasi-irreducible GNS, then $$PF(S)=FA(S)\cup \{P\in\Nd\mid 2P\in FA(S)\}.$$
However this is not the case. Consider $S\subseteq\mathbb{N}^2$, with $$\Hs(S)=\{(1,0),(2,0),(0,1),(1,1),(2,2),(1,3)\}.$$
It is seen that this is indeed a GNS and $PF(S)=FA(S)=\{(2,2),(1,3)\}$. This means that $S$ is quasi-symmetric and hence quasi-irreducible. However, $2(1,1)\in FA(S)$ and $(1,1)\not\in PF(S)$.
\end{example}

\section{Frobenius GNS of small and large type}

This section deals with the $t(S)$ for Frobenius GNS. It is known that for any Frobenius GNS $S$
$$t(S)\leq 2g(S)+1-\|F(S)\|.$$
And a Frobenius GNS is called almost symmetric if $t(S)= 2g(S)+1-\|F(S)\|$. We establish a new property in Proposition \ref{Condition almost sym} that is equivalent to almost-symmetry. We then find the lower bound on the type of a Frobenius GNS given its genus and Frobenius gap. We show that
$$\frac{g(S)}{\|F(S)\|-g(S)}\leq t(S).$$

\begin{proposition}\label{Condition almost sym}
A Frobenius GNS $S$ with Frobenius gap $F$ is almost symmetric if and only if
$$T(S)=\{x\in\Nd\mid F-x\in \left(\left(\mathbb{Z}^d\setminus S\right)\cup\{0\}\right)\}$$
is a GNS.
\end{proposition}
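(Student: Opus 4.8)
The plan is to unwind both conditions in terms of the counting identity that underlies almost-symmetry. Recall that for a Frobenius GNS $S$ with Frobenius gap $F$, the bound $t(S)\leq 2g(S)+1-\|F\|$ comes from pairing up the $\|F\|$ points in the box $B=\{x\in\Nd\mid 0\leq x\leq F\}$: each such $x$ lies in one of three classes, namely $x\in S$ with $F-x\in S$ (these pair off), $x\in\Hs(S)$ and $F-x\in\Hs(S)$, or exactly one of $x,F-x$ is a gap. The type counts certain gaps, and equality in the bound is equivalent to the statement that for every $x\in B$, at least one of $x$ and $F-x$ lies in $S$ — equivalently, $\Hs(S)$ is symmetric under $x\mapsto F-x$ in the sense that $x\in\Hs(S)\Rightarrow F-x\in S$ fails only in the "balanced" way, i.e. every gap $x$ has $F-x\in S$ unless $2x=F$. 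I would first make this characterisation precise (it is essentially the $d\geq 2$ analogue of the third bullet in the numerical-semigroup list quoted in the introduction, and is surely available from \cite{Almost Irr}), stating it as: $S$ is almost symmetric iff for every $x\in\Hs(S)$ either $F-x\in S$ or $2x=F$.

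Next I would analyse the set $T(S)=\{x\in\Nd\mid F-x\in(\mathbb{Z}^d\setminus S)\cup\{0\}\}$. Since $F-x\in\mathbb{Z}^d\setminus S$ already forces $F-x\leq F$ (points outside the box and not $\leq F$ would need checking — but $F-x$ has some negative coordinate unless $x\leq F$, and negative-coordinate points are in $\mathbb{Z}^d\setminus S$, so actually $T(S)$ is automatically infinite in a trivial way; I must be careful here). The right reading is: $x\in T(S)$ iff $x=F$ (giving $F-x=0$) or $F-x\notin S$. For $x\leq F$ this says $F-x$ is a gap or $x=F$; for $x$ with some coordinate exceeding $F^{(i)}$, $F-x\notin\Nd$ so $F-x\notin S$ and $x\in T(S)$ automatically. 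Thus $\Nd\setminus T(S)=\{x\in B\mid F-x\in S\text{ and }x\neq F\}$, a subset of the finite box $B$, so $\Nd\setminus T(S)$ is always finite and $0\in T(S)$ automatically (as $F\in\Hs(S)$ means $F-0\notin S$). Hence $T(S)$ is a GNS if and only if $T(S)$ is closed under addition.

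So the crux reduces to: $T(S)$ is closed under addition iff $S$ is almost symmetric. For the forward-ish direction I would argue the contrapositive: if $S$ is not almost symmetric, pick $x\in\Hs(S)$ with $F-x\notin S$ and $2x\neq F$; then $x\in T(S)$ and $F-x\in T(S)$ (since $F-(F-x)=x\in\Hs(S)\subseteq\mathbb{Z}^d\setminus S$), but their sum is $F$, and $F\notin T(S)$? — wait, $F\in T(S)$ since $F-F=0$. So that sum lands \emph{in} $T(S)$; I need a different witness. The correct obstruction: take such an $x$; then $x\in T(S)$, and I want some $y\in T(S)$ with $x+y\notin T(S)$, i.e. $x+y\neq F$ and $F-(x+y)\in S$. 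Since $F-x\notin S$, if I can find $y\in T(S)$ with $y\neq 0$, $x+y\neq F$, and $F-x-y\in S$, done — but $F-x-y\in S$ with $y\in S$-complement-ish makes this delicate; the clean choice is $y$ a generator-type element. Conversely, if $S$ is almost symmetric, I would show closure: given $x,y\in T(S)$, suppose $x+y\notin T(S)$, so $x+y\neq F$ and $F-(x+y)\in S$; I must derive a contradiction with $x,y\in T(S)$ using the almost-symmetry dichotomy applied to $F-x$ or $F-y$ (each is either not in $\Nd$, or a gap, or equals $0$). The main obstacle — and where I would spend the most care — is exactly this closure argument: handling the boundary cases where $x$ or $y$ sticks out of the box $B$, and correctly using "either $F-z\in S$ or $2z=F$" for the gap $z=F-x$ (noting $2(F-x)=F$ means $2x=F$, the excluded balanced case) to force $F-(x+y)\notin S$. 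I expect the numerical-semigroup proof in \cite{My paper type} to be exactly this manipulation with $\mathbb{N}$ replaced by $\Nd$, so once the box/boundary bookkeeping is set up the algebra should transfer essentially verbatim.
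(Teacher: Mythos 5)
There is a genuine gap, and it occurs at the very first step of your plan. The characterisation you propose to ``make precise'' --- that $S$ is almost symmetric if and only if for every $x\in\Hs(S)$ either $F-x\in S$ or $2x=F$ --- is false: that condition characterises \emph{irreducible} (symmetric or pseudo-symmetric) GNS, a strictly smaller class. Already for $d=1$, the numerical semigroup $S=\langle 5,6,7,8,9\rangle$ has $\Hs(S)=\{1,2,3,4\}$, $F=4$, $g=4$ and $PF(S)=\{1,2,3,4\}$, so $t(S)=4=2g(S)-F$ and $S$ is almost symmetric; yet $x=1$ has $F-x=3\notin S$ and $2x\neq F$. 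The correct pointwise statement is that $S$ is almost symmetric iff every gap $x$ with $F-x\notin S$ lies in $PF(S)$. Because your two closure arguments are built on the wrong dichotomy (``$F-z\in S$ or $2z=F$''), they cannot be completed as written --- and indeed you leave both directions unfinished, explicitly getting stuck in the forward direction (``the clean choice is $y$ a generator-type element'') and deferring the converse to the hoped-for transfer from \cite{My paper type}. Your preliminary analysis of $T(S)$ is correct ($\Nd\setminus T(S)$ is finite, $0\in T(S)$, so being a GNS reduces to closure under addition), but the crux is not established.

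For comparison, the paper's proof sidesteps any pointwise characterisation of almost symmetry. It computes the ``stabiliser'' $A=\{x\in\Nd\mid x+T(S)\subseteq T(S)\}$ and shows directly that $A=S\cup PF(S)$, whence $t(S)=|A\setminus S|=g(S)-|\Nd\setminus A|$. Since $0\in T(S)$ forces $A\subseteq T(S)$, and $|\Nd\setminus T(S)|=\|F\|-g(S)-1$, this simultaneously re-derives the bound $t(S)\leq 2g(S)+1-\|F\|$ and shows equality holds iff $T(S)=A$, which is exactly closure of $T(S)$ under addition. If you want to salvage your element-wise route, you would first need to prove the corrected characterisation (gaps $x$ with $F-x\notin S$ are pseudo-Frobenius) and then run the closure argument with that; but the counting identity via $A$ is both shorter and does the equality case for free.
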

\begin{proof}
Firstly note that
$$\Nd\setminus T(S)=\{F-s\mid s\in S\setminus\{0\}, s\leq F\}.$$
So $|\Nd\setminus T(S)|=\|F\|-g(S)-1$. Now consider some $x\in\Nd$.
We see that $x+T(S)\subseteq T(S)$ if and only if $y\not\in T(S)$ implies that $y-x\not\in T(S)$. This happens if and only if $F-y\in S\setminus\{0\}$ implies $F-y+x\in S\setminus\{0\}$. This is clearly equivalent to $x\in S\cup PF(S)$. This means that
$$A=\{x\in\Nd\mid x+T(S)\subseteq T(S)\}=S\cup PF(S).$$
Now since $t(S)$ is the size of $PF(S)$ we see that
$$t(S)=|A\setminus S|=|\Nd\setminus S|-|\Nd\setminus A|=g(S)-|\Nd\setminus A|.$$
Since $0\in T(S)$ we know that $A\subseteq T(S)$. This implies that
$$t(S) =g(S)-|\Nd\setminus A| \leq g(S)-|\Nd\setminus T(S)| =2g(S)+1-\|F\|.$$
Moreover equality holds if and only if $T(S)=A$. This is equivalent to $T(S)$ being closed under addition, which is of course equivalent to $T(S)$ being a GNS.
\end{proof}

\begin{theorem}\label{lower bound for type}
Given a Frobenius GNS $S\subseteq\Nd$ we have
$$\frac{g(S)}{\|F(S)\|-g(S)}\leq t(S).$$
\end{theorem}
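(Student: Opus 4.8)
The plan is to mimic the numerical-semigroup argument from \cite{Type} that establishes $\frac{g(S)}{F(S)+1-g(S)}\leq t(S)$, suitably adapted to the box geometry of $\Nd$. The key object is the "Apéry-type" set attached to the unique Frobenius gap $F=F(S)$. Concretely, set $B=\{x\in\Nd\mid 0\leq x\leq F\}$, so $|B|=\|F\|$, and partition $B\cap\Hs(S)$, the set of gaps lying in the box, according to how far "up" one can push each gap inside $\Hs(S)$. Note $g(S)=|B\cap\Hs(S)|+1$ is false in general — rather $B\cap\Hs(S)=\Hs(S)$ is not automatic; however, every gap $x$ satisfies $x\leq F$? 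This is \emph{not} true for GNS in general, so the first thing I would verify carefully is whether, for a \emph{Frobenius} GNS, all gaps lie in the box $B$. In fact this can fail, so instead I would work with $\Hs(S)$ directly and with the set $S\cap B$ of semigroup elements below $F$; the identity $|S\cap B|+|\Hs(S)\cap B| = \|F\|$ holds, and I will relate $|\Hs(S)\cap B|$ to $g(S)$ only through the inequality $|\Hs(S)\cap B|\leq g(S)$, with $|S\cap B|=\|F\|-|\Hs(S)\cap B|\geq \|F\|-g(S)$.

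The main step is a counting argument. For each gap $x\in\Hs(S)$ that is \emph{not} pseudo-Frobenius, there is a nonzero $s\in S$ with $x+s\in\Hs(S)$; iterating, every gap $x$ can be connected by a chain $x=x_0 < x_1 < \dots < x_m$ inside $\Hs(S)$ (each step adding a nonzero semigroup element) terminating at a pseudo-Frobenius gap $x_m\in PF(S)$. This gives a (not necessarily well-defined, so I would fix a rule, e.g. always add a minimal generator and break ties by $\prec$) map assigning to each gap a pseudo-Frobenius gap above it. The crucial estimate: each pseudo-Frobenius gap $P$ can be "reached" from at most $|S\cap B|$-many gaps, or more precisely, the fibre over $P$ injects into $S\cap\{y : y\leq P\}\subseteq S\cap B$. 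The cleanest way to see this: if $P\in PF(S)$ and $x$ is a gap with $P=x+s_1+\dots+s_k$ for nonzero $s_i\in S$, then $s_1+\dots+s_k\in S$ and $s_1+\dots+s_k\leq P\leq F$, so $P-x\in (S\setminus\{0\})\cap B$. Thus the assignment $x\mapsto(P,P-x)$ shows $g(S)=|\Hs(S)|\leq \sum_{P\in PF(S)}|(S\cap B)\setminus\{0\}| \leq t(S)\cdot(|S\cap B|-1)$? I need the $-1$ to make this tight; since $0\notin$ the difference set when $x\neq P$ and $x=P$ contributes separately, I get $g(S)\leq t(S)\cdot |S\cap B|$ at worst, hence $g(S)\leq t(S)(\|F\|-|\Hs(S)\cap B|)$.

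To finish I would need $\|F\|-|\Hs(S)\cap B|\leq \|F\|-g(S)+1$, i.e. $|\Hs(S)\cap B|\geq g(S)-1$, which again asks that at most one gap lies outside the box $B$; combined with bookkeeping on whether $F$ itself and the "$x=P$" cases are double-counted, this should yield exactly $g(S)\leq t(S)(\|F\|-g(S))$, i.e. the claimed $\frac{g(S)}{\|F\|-g(S)}\leq t(S)$. The main obstacle, and where I would spend the most care, is precisely this boundary accounting: proving (or correctly using) that for a Frobenius GNS every gap satisfies $x\leq F(S)$, so that $\Hs(S)\subseteq B$ and $|\Hs(S)\cap B|=g(S)$, which then gives $|S\cap B|=\|F\|-g(S)$ and the inequality drops out cleanly. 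If that containment is false in general, the fallback is to replace $\|F\|$ by $|B|$ where $B$ is enlarged to contain all gaps and redo the count, but the statement as written strongly suggests $\Hs(S)\subseteq\{x\mid x\leq F\}$ holds for Frobenius GNS — establishing that lemma is the real content, after which the fibre-counting above is routine.
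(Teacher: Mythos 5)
Your approach is essentially the paper's: inject $\Hs(S)$ into $(S\cap B)\times PF(S)$ via $x\mapsto(P-x,P)$ for a suitable pseudo-Frobenius gap $P$ above $x$, then count. Two points where you hesitate are worth clearing up. First, the containment $\Hs(S)\subseteq B=\{x\in\Nd\mid 0\leq x\leq F\}$ that you call ``the real content'' and worry might fail is in fact immediate: $\Hs(S)$ is a finite poset under the natural partial ordering, so every gap lies below some maximal gap, and a Frobenius GNS by definition has $F$ as its unique maximal gap; hence $|\Hs(S)\cap B|=g(S)$ and $|S\cap B|=\|F\|-g(S)$ with no boundary bookkeeping needed. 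Second, the paper sidesteps the well-definedness issue in your chain construction by fixing a relaxed monomial ordering $\prec$ and setting $\phi(x)=\max_\prec\{s\in S\mid x+s\in\Hs(S)\}$ directly; maximality of $\phi(x)$ forces $x+\phi(x)\in PF(S)$, and $\psi(x)=(\phi(x),x+\phi(x))$ is injective into $(S\cap B)\times PF(S)$. Your worry about a ``$-1$'' is unfounded: $\phi(x)=0$ is permitted (it occurs precisely when $x\in PF(S)$), $0\in S\cap B$ is already counted, and the bound $g(S)\leq(\|F\|-g(S))\,t(S)$ drops out exactly.
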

\begin{proof}
Fix a relaxed monomial ordering $\prec$ on $\Nd$.
Define a map $\phi$ from $\Hs(S)$ to $S$ as follows:
$$\phi(x)=\max_{\prec}\{s\in S\mid x+s\in\Hs(S)\}.$$
Here we are taking the maximum of a finite nonempty set, so $\phi$ is well defined. Consider some nonzero $s\in S$, we know that $\phi(x)+s\in S$ and $\phi(x)\prec \phi(x)+s$. The maximality of $\phi(x)$ implies that $x+\phi(x)+s\in S$. This means that $x+\phi(x)\in PF(S)$. Let $B$ be the box $$B=\{x\in\Nd\mid 0\leq x\leq F(S)\}.$$
So $|B|=\|F(S)\|$. Since $S$ is a Frobenius GNS, we know that $\Hs(S)\subseteq B$. This means that $|B\cap S|=\|F(S)\|-g(S)$. Now we define a map $\psi$ from $\Hs(S)$ to $(S\cap B)\times PF(S)$ given by
$$\psi(x)=(\phi(x),x+\phi(x)).$$
This map is clearly injective therefore $g(S)\leq (\|F(S)\|-g(S))t(S)$.
\end{proof}

\section{Lower bounds for the number of Frobenius GNSs}

In this and the next section we will attempt to count the number of Frobenius GNS with a given Frobenius number in $\Nd$. In this section we will obtain a lower bound for $N(F)$. We denote $\overline{x}=\lfloor\frac{x+1}{2}\rfloor$.

First consider the case when $d=1$ i.e. of numerical semigroups. Given $F\in\mathbb{N}$, let $B=\{x\in\mathbb{N}\mid \frac{F}{2}<x<F\}$. So $|B|=\overline{F-1}$. Now for any subset $X\subseteq B$, let $S(X)=\{0\}\cup X\cup \{x\mid x>F\}$. Then $S(X)$ is closed under addition and hence is a numerical semigroup. Moreover distinct $X$ lead to distinct numerical semigroups. We can therefore conclude that for $F\in \mathbb{N}$, $N(F)\geq 2^{\overline{F-1}}$. We will extend this technique to higher $d$, by choosing a large piece of the box where we can pick points almost independently.

Given $F\in\Nd$ we denote
$$S_F=\{0\}\cup\{(x^{(1)},\dots,x^{(d)})\mid \exists i: x^{(i)}>F^{(i)}\}.$$

\begin{theorem}\label{main lb}
Let $d_1=\lceil\frac{d+1}{3}\rceil$. If $F\in\Nd$ then
$$\left(3^{\frac{1}{2}\sum_{i=d_1}^{d-d_1}\binom{d}{i}}2^{\sum_{i=d-d_1+1}^{2d_1-1}\binom{d}{i}}\right)^{\overline{F^{(1)}}\cdots\overline{F^{(d)}}}\leq N(F).$$
\end{theorem}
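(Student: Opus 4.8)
The plan is to mimic the one-dimensional construction: carve out a large region $R$ inside the box $B=\{x\in\Nd\mid 0\le x\le F\}$ on which points can be selected almost freely, each choice yielding a distinct Frobenius GNS with Frobenius gap $F$. Concretely, I would first reduce to the case $F=(1,1,\dots,1)$ by a "blow-up" argument: given any $F\in\Nd$, partition the box $B$ into $\|F\|$ sub-boxes indexed by a sign pattern or by a coordinatewise rounding, so that the combinatorial structure of which sums land where is governed by the $\{0,1\}^d$-cube, and the factor $\overline{F^{(1)}}\cdots\overline{F^{(d)}}$ in the exponent is exactly the number of independent "copies" of the cube construction one can fit into the upper half of each coordinate interval. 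This is the direct analogue of taking $B=\{x\mid F/2<x<F\}$ when $d=1$, where $|B|=\overline{F-1}$; here instead one works with $\overline{F^{(i)}}=\lfloor (F^{(i)}+1)/2\rfloor$ choices per coordinate and combines them multiplicatively. So the heart of the matter is the case $d$ arbitrary but each $F^{(i)}\in\{0,1\}$, i.e. counting Frobenius GNS inside the Boolean lattice.

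For that core case, I would stratify the cube $\{0,1\}^d$ by the weight (number of $1$'s) of a point. Start from the base semigroup $S_F$ from the excerpt, which already contains everything outside the box. Adjoin to $S_F$ all cube-points of weight $<d_1$ (these are forced into $S$ to keep it closed: the sum of two such is either still low weight or escapes the box — this needs checking but should hold with $d_1=\lceil(d+1)/3\rceil$ chosen precisely so that weight $<d_1$ plus weight $<d_1$ has weight $<2d_1\le d+1$, hence often leaves the box). Keep all cube-points of weight $>d-d_1+1$ as gaps (they are maximal-ish and their pairwise sums escape the box). The interesting middle band splits into two sub-bands: for weights $i$ in the central range $d_1\le i\le d-d_1$, the sum of two weight-$i$ points has weight $\ge d_1$ but can be controlled so that choosing a point to be in $S$ or a gap is binary and "half" of these can be chosen independently, contributing the $3^{\frac12\binom{d}{i}}$ factor (three states: in $S$, a gap, or a gap whose double must also be handled — the $\sqrt 3$ is the per-point information content when points come in pairs $\{P,2P\}$ or with one dependency); for weights $i$ in $d-d_1+1\le i\le 2d_1-1$ the sums escape the box entirely, so each such point is a free binary choice, giving $2^{\binom{d}{i}}$.

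The key steps, in order: (1) verify closure of the base set $S_F\cup\{\text{weight}<d_1\text{ cube points}\}$, using the arithmetic $d_1+d_1>d$ when... — actually $d_1\approx d/3$, so one must verify that weight $<d_1$ plus weight $<d_1$ either escapes or stays below $d_1$; this is the load-bearing inequality and I expect it to be the main obstacle, since it requires pinning down exactly which sums stay in the box and showing the forced/free dichotomy is clean. (2) Show that for each point $P$ in the "$\sqrt3$" band, the only constraint linking it to other free choices is the pair constraint with $2P$ (or $P+P'$ for a partner $P'$), so that the band of size $\binom{d}{i}$ contributes $3^{\binom{d}{i}/2}$ independent configurations. (3) Show the "$2$" band points are genuinely unconstrained because all their relevant sums leave $B$. (4) Check that each resulting set is closed under addition, has finite complement (it contains $S_F$), has $F$ as a maximal gap and no other maximal gap — i.e. it is a Frobenius GNS with $F(S)=F$ — and that distinct configurations give distinct semigroups (immediate, since they differ inside $B$). (5) Finally, lift from the cube case to general $F$ by the blow-up of step one, raising the per-cube count to the power $\overline{F^{(1)}}\cdots\overline{F^{(d)}}$, which yields the stated bound.

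The main obstacle, as noted, is step (1)–(2): making the three-way "in $S$ / gap / gap with $2P$ also a gap" bookkeeping precise enough that the independent-choices count is exactly $3^{\frac12\binom{d}{i}}$ and not merely bounded by it, and verifying the closure inequalities force the low-weight points in and let the high-weight points out. Everything downstream (finiteness of complement, Frobenius-GNS property via Theorem \ref{Frobenius allowable}, injectivity of the parametrization) is routine once the band decomposition is established.
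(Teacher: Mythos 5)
Your band decomposition by ``weight'' (the number of coordinates lying in the upper half of the box) is the right combinatorial skeleton, and the exponents $3^{\frac12\binom{d}{i}}$ and $2^{\binom{d}{i}}$ are attached to the right bands. But the construction you build on it has a genuine gap, and it is exactly the step you flag as the main obstacle. Adjoining all low-weight points to $S$ does not give a set closed under addition: already in your cube setting, two points of weight $d_1-1$ with disjoint supports sum to a point of weight $2d_1-2\geq d_1$, which is a $0$--$1$ vector that stays inside the cube and lands in one of the bands where you allow gaps; if that point was designated a gap, closure fails. The paper's construction sidesteps this by going the other way: the low-weight boxes are left as \emph{gaps}, and the semigroup is defined as $S(X)=S_F\cup X\cup(X+X)$, where $X$ is drawn only from the middle and high bands. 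Closure then follows from a pigeonhole argument: any sum of three or more chosen points involves index sets $A_1,A_2,A_3$ with $|A_1|+|A_2|+|A_3|\geq 3d_1>d$, so two of them share a coordinate $t$, forcing the $t$-th coordinate of the sum past $F^{(t)}$ and pushing it into $S_F$; the only sums that can remain in the box are single pairwise sums, and those are absorbed by including $X+X$ wholesale. Injectivity of $X\mapsto S(X)$ then needs the separate verification that $(X+X)\cap(B\cup C)=\emptyset$, which uses the same arithmetic on $d_1$.

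Two further points. Your reduction to the Boolean cube, with the factor $\overline{F^{(1)}}\cdots\overline{F^{(d)}}$ interpreted as a number of ``independent copies'' of a cube construction, is not justified and is not how the count arises: sums of points taken from different copies do not respect the copy structure. In the paper, each region $B_A$ is a single box of $\overline{F^{(1)}}\cdots\overline{F^{(d)}}$ lattice points (upper half-interval in the coordinates $i\in A$, lower half-interval otherwise), and that factor is simply $|B_A|$; no product over copies is taken. Finally, the factor $3$ per pair comes from the involution $x\mapsto F-x$ on the middle band (for each pair $\{x,F-x\}$ one may include $x$, include $F-x$, or include neither, but never both, since $x+(F-x)=F$ must remain a gap), not from pairs of the form $\{P,2P\}$ as you suggest.
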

\begin{proof}
For a subset $A\subseteq\{1,2,\dots,d\}$, consider the following box
$$B_A=\left\{x\in\Nd\;\Big|\; \text{For }i \in A: \frac{F^{(i)}}{2}< x^{(i)}\leq F^{(i)},\text{For }i \notin A: 0\leq x^{(i)}<\frac{F^{(i)}}{2}\right\}.$$
For each $A$ the size of the box is $$|B_A|=\overline{F^{(1)}}\dotsm\overline{F^{(d)}}.$$
Let $B$ be the union of $B_A$ for all subsets $A$ with size $d_1\leq |A|\leq d-d_1$. Since all boxes have the same size, the size of $B$ is
$$|B|=\overline{F^{(1)}}\dotsm\overline{F^{(d)}}\sum_{i=d_1}^{d-d_1}\binom{d}{i}.$$
Let $C$ be the union of $B_A$ for all subsets $A$ with size $d-d_1+1\leq |A|\leq 2d_1-1$. So $C$ is disjoint from $B$ and the size of $C$ is
$$|C|=\overline{F^{(1)}}\dotsm\overline{F^{(d)}}\sum_{i=d-d_1+1}^{2d_1-1}\binom{d}{i}.$$
Also note that for any $x\in B$, $F-x$ is also in $B$. And for any $x\in C$, $F-x\not\in (B\cup C)$. A subset of $Y\subseteq B$ is called good if $x\in Y$ implies $F-x\not\in Y$. There are $3^{\frac{1}{2}|B|}$ good subsets of $B$. 
For a good subset $Y$ of $B$ and any subset $Z$ of $C$ we let $X=Y\cup Z$ and define
$$S(X)=S(Y,Z)=S_{F}\cup X\cup (X+X).$$
Since $Y$ was a good subset we know that $F$ is not in $S(X)$. It is therefore clear that $F$ is the unique maximal element of $\Nd\setminus S(X)$ under the natural partial ordering. 

We next show that $S(X)$ is closed under addition. Consider non-zero $x,y\in S(X)$. If at least one of them is in $S_F$ then $x+y$ is also in $S_F$. Therefore suppose neither of them is in $S_F$. If both of them are in $X$ then $x+y$ is in $X+X$. The remaining cases are when one of them is in $X$ and the other in $X+X$ or when both are in $X+X$. We can therefore write $x+y=\sum_{i=1}^{n}x_i$ with $x_i\in X$, $n=3$ in the first case and $n=4$ in the second. Say $x_i$ is in $B_{A_i}$ with $|A_i|\geq d_1$ for $1\leq i\leq n$. Since $$|A_1|+|A_2|+|A_3|\geq 3d_1>d,$$ we know that $A_1,A_2,A_3$ cannot be pairwise disjoint.
So say $t\in A_1\cap A_2$. Therefore $x_1^{(t)},x_2^{(t)}$ are both bigger than $\frac{F^{(t)}}{2}$ and hence $(x+y)^{(t)}>F^{(t)}$. This implies that $x+y\in S_F\subseteq S$. This shows that $S(X)$ is closed under addition. Therefore $S(X)$ is a Frobenius GNS with Frobenius gap $F$.

Finally we show that $S(X)$ are distinct for distinct $X$. This will follow from the fact that $$S(X)\cap (B\cup C)=X.$$
Clearly $X\subseteq S(X)\cap (B\cup C)$. However if equality doesn't hold then there will be some $x$ in $(X+X)\cap (B\cup C)$. This means that $x=x_1+x_2$ with $x_1,x_2\in X$. Say $x_i\in B_{A_i}$, we know that $d_1\leq|A_i|\leq 2d_1-1$. Now if $A_1\cap A_2\neq\emptyset$ then there will be some $t\in A_1\cap A_2$. That will imply $x^{(t)}>F^{(t)}$ and contradict $x\in (B\cup C)$. On the other hand if $A_1\cap A_2=\emptyset$ then $|A_1\cup A_2| =|A_1|+|A_2| \geq 2d_1>2d_1-1$. For each $i\in (A_1\cup A_2)$ we have $x^{(i)}>\frac{F^{(i)}}{2}$. This means that $x$ cannot be in any $B_A$ with $|A|\leq 2d_1-1$ and this again contradicts $x\in (B\cup C)$. We therefore see that $(X+X)\cap (B\cup C)=\emptyset$ and hence $X=S(X)\cap (B\cup C)$. Therefore $S(X)$ are distinct Frobenius GNS for distinct $X$. Hence the number of Frobenius GNS we constructed is
$3^{\frac{1}{2}|B|}2^{|C|}$.
\end{proof}
\begin{corollary}\label{cor main lb}
Let $d_1=\lceil\frac{d+1}{3}\rceil$. If $F\in\Nd$ then
$$\left(\left(\sqrt{3}\right)^{\frac{1}{2^{d}}\sum_{i=d_1}^{d-d_1}\binom{d}{i}}\times \Big(2\Big)^{\frac{1}{2^d}\sum_{i=d-d_1+1}^{2d_1-1}\binom{d}{i}}\right)^{\|F-1\|}\leq N(F).$$
\end{corollary}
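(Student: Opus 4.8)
The plan is to deduce Corollary \ref{cor main lb} directly from Theorem \ref{main lb} by bounding the exponent $\overline{F^{(1)}}\cdots\overline{F^{(d)}}$ from below and then rearranging powers; no step is genuinely difficult. The first ingredient is the elementary estimate
$$\overline{x}=\lfloor\frac{x+1}{2}\rfloor\geq\frac{x}{2}\qquad\text{for all }x\in\N,$$
with equality exactly when $x$ is even. Multiplying this over the coordinates of $F$, and using $\|F-1\|=\prod_{i=1}^{d}\big((F^{(i)}-1)+1\big)=\prod_{i=1}^{d}F^{(i)}$, gives
$$\overline{F^{(1)}}\cdots\overline{F^{(d)}}\;\geq\;\frac{1}{2^{d}}\prod_{i=1}^{d}F^{(i)}\;=\;\frac{\|F-1\|}{2^{d}}.$$
If some coordinate $F^{(i)}$ vanishes, both sides equal $0$, the left-hand side of the corollary equals $1$, and the inequality is the trivial bound $1\leq N(F)$; so one may assume $F^{(i)}\geq 1$ for all $i$.

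Next I would write $B=3^{\frac12\sum_{i=d_1}^{d-d_1}\binom{d}{i}}\,2^{\sum_{i=d-d_1+1}^{2d_1-1}\binom{d}{i}}$ for the base appearing in Theorem \ref{main lb}, and observe that $B\geq 1$ since both exponents are non-negative (the second sum may even be empty, in which case that factor is $1$). Because $t\mapsto B^{t}$ is non-decreasing for $B\geq 1$, the previous display yields
$$B^{\,\overline{F^{(1)}}\cdots\overline{F^{(d)}}}\;\geq\;B^{\,\|F-1\|/2^{d}}\;=\;\Big(B^{1/2^{d}}\Big)^{\|F-1\|}.$$
A short exponent computation shows
$$B^{1/2^{d}}=\left(\sqrt{3}\right)^{\frac{1}{2^{d}}\sum_{i=d_1}^{d-d_1}\binom{d}{i}}\times 2^{\frac{1}{2^{d}}\sum_{i=d-d_1+1}^{2d_1-1}\binom{d}{i}},$$
which is precisely the base on the left-hand side of the corollary.

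Finally I would combine these with Theorem \ref{main lb}, which asserts $B^{\,\overline{F^{(1)}}\cdots\overline{F^{(d)}}}\leq N(F)$; chaining it with the displays above produces exactly the claimed inequality. The only points needing attention are the floor estimate $\overline{x}\geq x/2$, the non-negativity of the two binomial sums (so that $t\mapsto B^{t}$ is monotone), and the degenerate case of a zero coordinate, which is absorbed by $N(F)\geq 1$; I anticipate no real obstacle.
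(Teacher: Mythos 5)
Your proof is correct and is exactly the deduction the paper intends (the paper states the corollary without proof, as an immediate consequence of Theorem \ref{main lb}): apply $\overline{x}\geq x/2$ coordinatewise to get $\overline{F^{(1)}}\cdots\overline{F^{(d)}}\geq\|F-1\|/2^{d}$, then raise the base $B\geq 1$ to these exponents and pull the factor $2^{-d}$ into the base. The handling of a vanishing coordinate, the possibly empty second binomial sum, and the monotonicity of $t\mapsto B^{t}$ are exactly the minor points one needs to note, and you have noted them.
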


For most $d$, the lower bound in Corollary \ref{cor main lb} appears to be optimised. However for $d=5$ this gives a lower bound of
$$\left(3^{\frac{5}{16}}\right)^{F^{(1)}F^{(2)}F^{(3)}F^{(4)}F^{(5)}}\leq N(F^{(1)},F^{(2)},F^{(3)},F^{(4)},F^{(5)}).$$
But we can improve the $3^{5/16}$ to $\sqrt{2}$.

\begin{proposition}
For $F\in\mathbb{N}^{5}$
$$\left(\sqrt{2}\right)^{\|F-1\|}\leq N(F^{(1)},F^{(2)},F^{(3)},F^{(4)},F^{(5)}).$$
\end{proposition}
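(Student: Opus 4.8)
The plan is to drop the ``good subset'' device of Theorem~\ref{main lb} --- which for $d=5$ buys only a factor of $\sqrt 3$ per free point --- in favour of a region on which \emph{every} subset is admissible, so each point contributes a full factor of $2$. Single out one coordinate, say the first, and consider the slab
$$R=\Big\{x\in\mathbb{N}^5:\tfrac{F^{(1)}}{2}<x^{(1)}\le F^{(1)},\ 0\le x^{(i)}\le F^{(i)}\text{ for }2\le i\le5\Big\},\qquad R'=R\setminus\{F\}.$$
Thus $R$ is the product of $B_1=\{x\in\mathbb{N}:F^{(1)}/2<x\le F^{(1)}\}$ with the boxes $\{0,\dots,F^{(i)}\}$ for $2\le i\le5$; since $|B_1|=\overline{F^{(1)}}$ we get $|R'|=\overline{F^{(1)}}\prod_{i=2}^5(F^{(i)}+1)-1$. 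If some $F^{(i)}=0$ then $\|F-1\|=0$ and $S_F$ already shows $N(F)\ge1$, so from now on assume every $F^{(i)}\ge1$; in particular $F\ne0$.

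The key point --- and what makes the slab work where the symmetric box did not --- is that $R+R\subseteq S_F$, because any $x,y\in R$ have $x^{(1)}+y^{(1)}>F^{(1)}$. Hence for each subset $X\subseteq R'$ the set $S(X)=S_F\cup X$ is closed under addition: $S_F\setminus\{0\}$ absorbs all of $\mathbb{N}^5$ under addition, and $X+X\subseteq R+R\subseteq S_F$, so no further condition on $X$ is needed. Since $R'\subseteq\{x\in\mathbb{N}^5:0\le x\le F\}\setminus\{0,F\}$, the gaps of $S(X)$ are exactly $(\{x:0\le x\le F\}\setminus\{0\})\setminus X$, a set that contains $F$ and is contained in the box $\{x:0\le x\le F\}$; hence $F$ is its unique maximal element for the natural partial order, so $S(X)$ is a Frobenius GNS with Frobenius gap $F$. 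Moreover $S_F\cap R'=\emptyset$, so $S(X)\cap R'=X$, and distinct $X$ yield distinct $S(X)$. Therefore $N(F)\ge 2^{|R'|}$.

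It remains to check $|R'|\ge\tfrac12\|F-1\|=\tfrac12\prod_{i=1}^5 F^{(i)}$. Write $\prod_{i=2}^5(F^{(i)}+1)=\prod_{i=2}^5 F^{(i)}+\delta$ with $\delta$ a positive integer (the left-hand product strictly exceeds the right-hand one), and use $\overline{F^{(1)}}\ge F^{(1)}/2$ together with $\overline{F^{(1)}}\ge1$:
$$|R|=\overline{F^{(1)}}\prod_{i=2}^5 F^{(i)}+\overline{F^{(1)}}\,\delta\ \ge\ \tfrac{F^{(1)}}{2}\prod_{i=2}^5 F^{(i)}+1\ =\ \tfrac12\|F-1\|+1.$$
Thus $|R'|=|R|-1\ge\tfrac12\|F-1\|$, and $N(F)\ge2^{|R'|}\ge(\sqrt2)^{\|F-1\|}$.

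The one point needing care --- and it is slight --- is this final accounting: excising the single point $F$ costs a factor of $2$ that must be regained, and the ``$+\delta$'' slack does exactly that; this is also why the argument must use a coordinate besides the distinguished one (for $d=1$ with $F$ even it genuinely fails, in line with Backelin's bound $2^{\lfloor(F-1)/2\rfloor}$). I would note in closing that the same slab construction gives $(\sqrt2)^{\|F-1\|}\le N(F)$ for every $d\ge2$; it improves on Corollary~\ref{cor main lb} precisely in small dimensions such as $d=5$, where the binomial exponent there drops below $\tfrac12$.
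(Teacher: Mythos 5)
Your proof is correct, and it takes a genuinely different route from the paper. The paper takes $D$ to be the union of the sixteen boxes $B_A$ with $|A|\geq 3$, relying on the pigeonhole fact that any two index sets $A_1,A_2\subseteq\{1,\dots,5\}$ of size at least $3$ must intersect, so $D+D\subseteq S_F$; it then sets $S(X)=S_F\cup X$ for arbitrary $X\subseteq D$. Your slab $R$ achieves the same absorption $R+R\subseteq S_F$ with a single distinguished coordinate, which is conceptually simpler, produces a larger free region in concrete cases (for instance $|R|=81$ versus $|D|=16$ when every $F^{(i)}=2$), and, as you observe, works verbatim for every $d\geq 2$ rather than only $d=5$. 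You are also more careful than the paper on one point: the point $F$ itself lies in $D$ whenever every $F^{(i)}>0$, so one must really restrict to $X\subseteq D\setminus\{F\}$; with that correction the paper's count becomes $2^{|D|-1}$, and since $|D|=16\prod\overline{F^{(i)}}$ equals $\tfrac12\|F-1\|$ exactly when all $F^{(i)}$ are even, the paper's computation would then fall one factor of $2$ short. Your $\delta$-slack step, exploiting $\prod_{i\geq 2}(F^{(i)}+1)-\prod_{i\geq 2}F^{(i)}\geq 1$, is precisely what reabsorbs the $-1$ from excising $F$ and makes the inequality airtight.
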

\begin{proof}
We use the notations from the proof of Theorem \ref{main lb}. Let $D$ be the union of all boxes $B_A$ with $|A|\geq 3$. There are $16$ such boxes, so the size of $D$ is
$$|D|=16\overline{F^{(1)}}\dots\overline{F^{(5)}}.$$
For an arbitrary subset $X$ of $D$ we define
$$S(X)=S_{F}\cup X$$
We see that $F$ is not in $S(X)$. It is therefore clear that $F$ is the unique maximal element of $\Nd\setminus S(X)$ under the natural partial ordering. 

We next show that $S(X)$ is closed under addition. Consider non-zero $x,y\in S(X)$. If at least one of them is in $S_F$ then $x+y$ is also in $S_F$. Therefore suppose that both of them are in $X$. Say $x\in B_{A_1}$ and $y\in B_{A_2}$ with $|A_i|\geq 3$. Since $|A_1|+|A_2|\geq 6>5$, we know that $A_1$ and $A_2$ cannot be disjoint.
Say $t\in A_1\cap A_2$, then $(x+y)^{(t)}>F^{(t)}$. This implies that $x+y\in S_F\subseteq S$. This shows that $S(X)$ is closed under addition. Therefore $S(X)$ is a Frobenius GNS with Frobenius gap $F$.

Finally we see that $S(X)$ are distinct for distinct $X$. This follows from the fact that $$S(X)\cap D=X.$$
Hence the number of Frobenius GNS we constructed is
$2^{|D|}$. Finally notice that
$$2^{|D|}=2^{16\overline{F^{(1)}}\dots\overline{F^{(5)}}}\geq 2^{16\frac{F^{(1)}\dots F^{(5)}}{32}}=\sqrt{2}^{\|F-1\|}.$$
\end{proof}

\begin{lemma}
For $d\in\mathbb{N}$ let $d_1=\lceil\frac{d+1}{3}\rceil$. Then we have:
$$\lim_{d\to\infty}\frac{1}{2^{d}}\sum_{i=d_1}^{d-d_1}\binom{d}{i}=1.$$
\end{lemma}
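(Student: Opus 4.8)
The plan is to rewrite the partial sum as a full sum minus two tails and then show the tails are exponentially small. By the binomial theorem $\sum_{i=0}^{d}\binom{d}{i}=2^{d}$, and since $\binom{d}{i}=\binom{d}{d-i}$ the reindexing $j=d-i$ shows $\sum_{i=d-d_1+1}^{d}\binom{d}{i}=\sum_{j=0}^{d_1-1}\binom{d}{j}$. Hence
$$\frac{1}{2^{d}}\sum_{i=d_1}^{d-d_1}\binom{d}{i}=1-\frac{2}{2^{d}}\sum_{i=0}^{d_1-1}\binom{d}{i},$$
so it suffices to prove that $T_d:=2^{-d}\sum_{i=0}^{d_1-1}\binom{d}{i}\to 0$.

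The key point is that $d_1-1$ is comfortably below $d/2$: since $d_1=\lceil\frac{d+1}{3}\rceil\le\frac{d+1}{3}+1$, we get $d_1-1\le\frac{d+4}{3}$, so for all large $d$ the ratio $\alpha_d:=(d_1-1)/d$ satisfies $\alpha_d\le\frac{1}{3}+o(1)$, and in particular $\alpha_d\le\frac{2}{5}$ once $d$ is large enough. The plan is then to invoke the standard upper tail estimate for binomial sums: for $k\le d/2$ one has $\sum_{i=0}^{k}\binom{d}{i}\le 2^{\,dH(k/d)}$, where $H(t)=-t\log_2 t-(1-t)\log_2(1-t)$ is the binary entropy function. (This follows from $1=(t+(1-t))^d\ge\sum_{i\le k}\binom{d}{i}t^i(1-t)^{d-i}\ge t^k(1-t)^{d-k}\sum_{i\le k}\binom{d}{i}$ with $t=k/d\le 1/2$, using $t/(1-t)\le 1$; alternatively it is an immediate Chernoff bound for $\mathrm{Bin}(d,1/2)$.) Applying this with $k=d_1-1$ gives $T_d\le 2^{\,d(H(\alpha_d)-1)}$.

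Finally, $H$ is continuous and strictly increasing on $[0,1/2]$ with $H(1/2)=1$, so $H(\tfrac{2}{5})<1$; pick $\delta=1-H(\tfrac{2}{5})>0$. For $d$ large we have $\alpha_d\le\tfrac{2}{5}$, hence $H(\alpha_d)\le H(\tfrac{2}{5})=1-\delta$, so $T_d\le 2^{-\delta d}\to 0$. Combining with the first display yields $\lim_{d\to\infty}\frac{1}{2^{d}}\sum_{i=d_1}^{d-d_1}\binom{d}{i}=1$. The only non-bookkeeping ingredient is the exponential tail bound on $\sum_{i\le k}\binom{d}{i}$, which is the step to state carefully (or cite); everything else is the reduction via symmetry and the elementary estimate $d_1-1\le(d+4)/3$.
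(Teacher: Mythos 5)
Your proof is correct and follows essentially the same route as the paper: reduce by symmetry to showing the single tail $2^{-d}\sum_{i=0}^{d_1-1}\binom{d}{i}$ vanishes, then bound that tail exponentially. The only difference is cosmetic --- the paper cites Hoeffding's inequality to get $\exp(-d/18)$, whereas you derive the equivalent entropy bound $2^{d(H(\alpha_d)-1)}$ from scratch, which makes your version self-contained.
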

\begin{proof}
From Hoeffding's inequality \cite{Hoeffding} we see that
$$\frac{1}{2^{d}}\sum_{i=0}^{d_1-1}\binom{d}{i} =\frac{1}{2^{d}}\sum_{i\leq \frac{d}{3}}\binom{d}{i}\leq \exp{\left(-2d\left(\frac{1}{6}\right)^2\right)}.$$
Therefore
$$\frac{1}{2^{d}}\sum_{i=d_1}^{d-d_1}\binom{d}{i} =1-2\frac{1}{2^{d}}\sum_{i=0}^{d_1-1}\binom{d}{i} \geq 1-2\exp{\left(-\frac{d}{18}\right)}.$$
We conclude that
$$\lim_{d\to\infty}\frac{1}{2^{d}}\sum_{i=d_1}^{d-d_1}\binom{d}{i}=1.$$
\end{proof}

\begin{corollary}\label{large d lower bound}
Given $\epsilon>0$, for sufficiently large $d$ we have:\\ for every $F\in\Nd$
$$\left(\sqrt{3}-\epsilon\right)^{\|F-1\|}\leq N(F).$$
\end{corollary}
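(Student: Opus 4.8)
The plan is to obtain this as a limiting consequence of Corollary \ref{cor main lb} together with the preceding Lemma; all the genuine content lives there, so what remains is an asymptotic clean-up. Write the base appearing in Corollary \ref{cor main lb} as $c_d=\left(\sqrt{3}\right)^{a_d}2^{b_d}$, where
$$a_d=\frac{1}{2^{d}}\sum_{i=d_1}^{d-d_1}\binom{d}{i},\qquad b_d=\frac{1}{2^{d}}\sum_{i=d-d_1+1}^{2d_1-1}\binom{d}{i},$$
with $d_1=\lceil\frac{d+1}{3}\rceil$, so that Corollary \ref{cor main lb} reads $N(F)\geq c_d^{\|F-1\|}$ for every $F\in\Nd$. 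Note that the right-hand side does not involve $F$ except through the exponent $\|F-1\|$, which is what will give uniformity in $F$.

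First I would dispose of the degenerate cases. If $\sqrt{3}-\epsilon\leq 1$, or if some coordinate of $F$ vanishes so that $\|F-1\|=0$, then $\left(\sqrt{3}-\epsilon\right)^{\|F-1\|}\leq 1$, and the bound follows from $N(F)\geq 1$, which holds for $F\neq 0$ since $S_F$ is itself a Frobenius GNS with Frobenius gap $F$ (its gaps are exactly the nonzero points below $F$, with unique maximum $F$). So from now on I may assume $\sqrt{3}-\epsilon>1$ and $\|F-1\|\geq 1$.

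The remaining step is the limit, and it is short. Since $b_d\geq 0$ (an empty sum is $0$, and all terms are positive otherwise) we have $2^{b_d}\geq 1$, hence $c_d\geq\left(\sqrt{3}\right)^{a_d}$. The Lemma gives $a_d\to 1$ as $d\to\infty$ (indeed its proof yields $a_d\geq 1-2\exp(-d/18)$), and $t\mapsto\left(\sqrt{3}\right)^{t}$ is continuous, so $\left(\sqrt{3}\right)^{a_d}\to\sqrt{3}$. Therefore there is an $M$ such that for all $d>M$ we have $c_d\geq\left(\sqrt{3}\right)^{a_d}>\sqrt{3}-\epsilon>1$. Since $\|F-1\|\geq 0$ and $t\mapsto t^{\|F-1\|}$ is nondecreasing on the positive reals, we conclude
$$N(F)\geq c_d^{\|F-1\|}\geq\left(\sqrt{3}-\epsilon\right)^{\|F-1\|}$$
for every $F\in\Nd$ and every $d>M$, as claimed. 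I do not expect any real obstacle here: the hard work — the explicit construction of exponentially many Frobenius GNS with a prescribed Frobenius gap and the binomial concentration estimate — is precisely Theorem \ref{main lb} and the Lemma, and the only point deserving attention in this corollary is that the threshold $M$ can be chosen independently of $F$, which is immediate because the estimate on $c_d$ is $F$-free.
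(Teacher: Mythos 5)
Your proposal is correct and is exactly the derivation the paper intends: the corollary is stated without proof as an immediate consequence of Corollary \ref{cor main lb} together with the limit $\frac{1}{2^d}\sum_{i=d_1}^{d-d_1}\binom{d}{i}\to 1$ from the preceding Lemma, which is precisely what you carry out (dropping the nonnegative $2$-power factor and choosing $M$ independently of $F$). The attention you give to the degenerate cases $\|F-1\|=0$ and $\sqrt{3}-\epsilon\leq 1$ is a harmless bonus not present in the paper.
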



\section{Upper bounds for the number of Frobenius GNSs}

In this section we will obtain an upper bound for the number of Frobenius GNS with a given Frobenius gap.

\begin{lemma}\label{root 3 upper bound}
For any $F\in\Nd$ we have:
$$N(F)\leq \sqrt{3}^{\|F\|}.$$
\end{lemma}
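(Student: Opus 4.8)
The plan is to prove the bound $N(F) \leq \sqrt{3}^{\|F\|}$ by a pairing argument on the box $B = \{x \in \Nd \mid 0 \leq x \leq F\}$, exploiting the involution $x \mapsto F - x$. Since $S$ is a Frobenius GNS with Frobenius gap $F$, the semigroup $S$ is completely determined by the set $S \cap B$: indeed all gaps lie in $B$ (because $F$ is the unique maximal gap), and outside $B$ every point has some coordinate exceeding $F^{(i)}$, hence lies in $S$. So it suffices to bound the number of possible sets $S \cap B$.

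\textbf{The key combinatorial constraint.}

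First I would observe the crucial restriction: for each $x \in B$ with $x \neq 0$ and $x \neq F$, the pair $\{x, F-x\}$ cannot have \emph{both} elements in $S$. For if $x \in S$ and $F - x \in S$ with both nonzero, then $F = x + (F-x) \in S$, contradicting $F \in \Hs(S)$. (When $F$ has all even coordinates there is a fixed point $F/2$ of the involution; this single point contributes only a bounded factor and can be absorbed, or handled by noting $F/2$ cannot be in $S$ anyway since $F/2 + F/2 = F$.) Thus, partitioning $B \setminus \{0, F\}$ into the $\sim \|F\|/2$ orbits of size $2$ under $x \mapsto F-x$, each orbit has at most $3$ admissible configurations for its intersection with $S$ (namely: neither point in $S$; only $x$ in $S$; only $F-x$ in $S$). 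The points $0$ and $F$ are forced ($0 \in S$, $F \notin S$). This already gives roughly $3^{\|F\|/2} = \sqrt{3}^{\|F\|}$ choices for $S \cap B$, hence for $S$.

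\textbf{Making the count exact.}

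To get the clean bound $N(F) \leq \sqrt{3}^{\|F\|}$ without error terms, I would count orbits carefully. The involution $\iota: x \mapsto F - x$ on $B$ has $\|F\|$ points total; it is a fixed-point-free involution precisely when some $F^{(i)}$ is odd, giving exactly $\|F\|/2$ orbits of size $2$, of which the orbit $\{0, F\}$ is forced, leaving $\|F\|/2 - 1$ free orbits each with $\leq 3$ states, so $N(F) \leq 3^{\|F\|/2 - 1} < \sqrt{3}^{\|F\|}$. When every $F^{(i)}$ is even, $\iota$ fixes only $F/2$, so there are $(\|F\| - 1)/2$ orbits of size $2$ plus the singleton $\{F/2\}$; the orbit $\{0,F\}$ is forced and $F/2 \notin S$ is forced, leaving $(\|F\|-1)/2 - 1$ free orbits, giving $N(F) \leq 3^{(\|F\|-1)/2 - 1} \leq \sqrt{3}^{\|F\|}$. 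In both cases the bound holds. The main (mild) obstacle is just bookkeeping the parity cases and the forced points $0$, $F$, $F/2$ correctly so that the exponent never exceeds $\|F\|/2$; there is no analytic difficulty here, and no need to verify closure under addition since we are only bounding, not constructing.
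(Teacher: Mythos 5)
Your proposal is correct and follows essentially the same approach as the paper: partition the box $\{x \in \Nd \mid 0 \leq x \leq F\}$ into pairs $\{x, F-x\}$, observe that a Frobenius GNS with Frobenius gap $F$ can contain at most one point from each pair, and conclude $N(F) \leq 3^{\lfloor \|F\|/2 \rfloor} \leq \sqrt{3}^{\|F\|}$. Your bookkeeping of the forced pair $\{0,F\}$ and the potential fixed point $F/2$ is slightly more careful than the paper's, but it does not change the argument in any substantive way.
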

\begin{proof}
Consider the box $X=\{x\in\Nd\mid 0\leq x\leq F\}$. This box has $\|F\|$ points in it. They are divided into $\lfloor\frac{\|F\|}{2}\rfloor$ pairs of the form $x,F-x$ and possibly a single point $x$ with $x+x=F$. If $S$ is a Frobenius GNS with Frobenius gap $F$, then $S_F\subseteq S$ and $\frac{F}{2}\not\in S$. Moreover for each of the $\lfloor\frac{\|F\|}{2}\rfloor$ pairs we can either pick one of the two points or neither of them. This gives $3$ choices for each pair. Therefore
$$N(F)\leq 3^{\lfloor\frac{\|F\|}{2}\rfloor}\leq \sqrt{3}^{\|F\|}.$$
\end{proof}

Combining Corollary \ref{large d lower bound} and Lemma \ref{root 3 upper bound}, we get the following result.

\begin{theorem}\label{count large d}
Given $\epsilon>0$, there exist $M>0$ such that for $d>M$ and $F\in\Nd$
$$\left(\sqrt{3}-\epsilon\right)^{\|F-1\|}\leq N(F)\leq \sqrt{3}^{\|F\|}.$$
\end{theorem}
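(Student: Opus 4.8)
The statement to prove is Theorem \ref{count large d}, which combines the lower bound Corollary \ref{large d lower bound} and the upper bound Lemma \ref{root 3 upper bound}.

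The plan is as follows. The upper bound $N(F)\leq\sqrt{3}^{\|F\|}$ holds for every $d$ and every $F\in\Nd$ by Lemma \ref{root 3 upper bound}, so there is nothing to do there; it contributes the right-hand inequality immediately. For the left-hand inequality, I would invoke Corollary \ref{large d lower bound}: given $\epsilon>0$, there is some threshold so that for all sufficiently large $d$ and every $F\in\Nd$ one has $(\sqrt{3}-\epsilon)^{\|F-1\|}\leq N(F)$. Taking $M$ to be that threshold, every $d>M$ and every $F\in\Nd$ satisfies both inequalities simultaneously, which is exactly the claim.

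The only genuine content is therefore already packaged in Corollary \ref{large d lower bound}, whose proof in turn rests on Corollary \ref{cor main lb} together with the lemma computing $\lim_{d\to\infty}\frac{1}{2^d}\sum_{i=d_1}^{d-d_1}\binom{d}{i}=1$. So if one wanted a self-contained argument, the key steps in order would be: (i) from Corollary \ref{cor main lb}, $N(F)\geq\left(\big(\sqrt{3}\big)^{\frac{1}{2^d}\sum_{i=d_1}^{d-d_1}\binom{d}{i}}\cdot 2^{\frac{1}{2^d}\sum_{i=d-d_1+1}^{2d_1-1}\binom{d}{i}}\right)^{\|F-1\|}\geq\left(\sqrt{3}^{\,\frac{1}{2^d}\sum_{i=d_1}^{d-d_1}\binom{d}{i}}\right)^{\|F-1\|}$ since the extra factor with base $2$ is at least $1$; (ii) using the Hoeffding estimate, the exponent $\frac{1}{2^d}\sum_{i=d_1}^{d-d_1}\binom{d}{i}\to 1$ as $d\to\infty$, uniformly in $F$ because $F$ does not enter it; (iii) hence given $\epsilon>0$ choose $M$ so that for $d>M$ the exponent is close enough to $1$ that $\sqrt{3}^{\,\frac{1}{2^d}\sum_{i=d_1}^{d-d_1}\binom{d}{i}}\geq\sqrt{3}-\epsilon$, which works simultaneously for all $F\in\Nd$ since the base $\|F-1\|\geq 0$ (with the convention that $\|F-1\|$ may be $0$, in which case the inequality is trivial). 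Combining (iii) with Lemma \ref{root 3 upper bound} finishes the proof.

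I do not expect any real obstacle here: the theorem is an assembly of two previously proved results, and the uniformity in $F$ is automatic because the rate in Corollary \ref{large d lower bound} depends only on $d$. The one small subtlety worth a sentence is to note that $x\mapsto x^{\|F-1\|}$ is monotone on nonnegative reals so that replacing the exponent's base by the smaller $\sqrt{3}-\epsilon$ preserves the inequality, and to dispatch the degenerate case where some coordinate of $F$ is $0$ (so $\|F-1\|=0$) where both sides are easy to compare. Thus the proof is simply: "Combine Corollary \ref{large d lower bound} with Lemma \ref{root 3 upper bound}."
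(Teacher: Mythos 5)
Your proposal is correct and is exactly the paper's argument: Theorem \ref{count large d} is stated in the paper with the one-line justification of combining Corollary \ref{large d lower bound} with Lemma \ref{root 3 upper bound}. The extra detail you supply about monotonicity and the degenerate $\|F-1\|=0$ case is sound but goes beyond what the paper writes out.
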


Theorem \ref{count large d} shows that for large $d$ the upper bound of $\sqrt{3}^{\|F\|}$ is close to the actual value. However for small $d$ this is not the case. For example for $d=1$, \cite{Backelin} proves that for any $F\in\mathbb{N}$
$$N(F)\leq 4\sqrt{2}^{F}.$$
We therefore look for a stronger upper bound specially for smaller $d$.

Given $P,F\in\Nd$ with $P\leq F$ we denote by $L(P,F)$ the number of Frobenius GNS with Frobenius gap $F$ that do not contain $P$.

\begin{lemma}\label{F no P}
Given $P,F\in\Nd$ with $P\leq F$, we have
$$L(P,F)\leq \phi^{\|F\|}\left(\frac{\phi}{\sqrt[4]{5}}\right)^{\|F\|-\|P\|}.$$
\end{lemma}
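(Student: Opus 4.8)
The plan is to prove Lemma~\ref{F no P} by induction on $\|F\|$, refining the elementary pairing argument of Lemma~\ref{root 3 upper bound} so that the extra hypothesis $P\notin S$ is exploited in the ``lower'' part of the box $\{x\in\Nd\mid 0\le x\le F\}$; the one‑dimensional skeleton of the argument is the one used by Backelin.

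First I would set up a slice reduction. Fix a coordinate $i$ with $F^{(i)}\ge 1$, write $e_i$ for the $i$-th standard basis vector, and let $H=\{x\le F\mid x^{(i)}=F^{(i)}\}$, a slice with $m:=\prod_{j\ne i}(F^{(j)}+1)=\|F\|-\|F-e_i\|$ points. Given a Frobenius GNS $S$ with Frobenius gap $F$ and $P\notin S$, form $S'=\{x\in S\mid x^{(i)}<F^{(i)}\}\cup\{x\in\Nd\mid x^{(i)}\ge F^{(i)}\}$; one checks $S'$ is a GNS, and when $F-e_i\notin S$ it is in fact a Frobenius GNS with Frobenius gap $F-e_i$ (all gaps of $S'$ lie below $F-e_i$ since all gaps of $S$ lie below $F$). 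Then $S$ is recovered from the pair $(S',\,S\cap H)$, so it suffices to bound, for each admissible $S'$, the number of choices of $T:=S\cap H$. Here the reflection $z\mapsto F-z$ does the work: for $z\in H$ the point $F-z$ lies in the bottom slice $\{x^{(i)}=0\}$, so its membership is already recorded in $S'$; if $F-z\in S'$ then $z\notin T$ is forced, $T$ must avoid $F$, and $T$ must be closed under adding those elements of $S'$ whose $i$-th coordinate is $0$. Counting these configurations of $T$, feeding in the inductive bound $L(P',F-e_i)\le \phi^{\|F-e_i\|}(\phi/\sqrt[4]{5})^{\|F-e_i\|-\|P'\|}$ for the appropriate forbidden point $P'$ (namely $P$ itself when $P^{(i)}<F^{(i)}$, and a point recording the slice constraint otherwise), one obtains the recursion; the constants $\phi$ and $\phi/\sqrt[4]{5}$ are exactly those for which the per‑slice factor times the inductive factor collapses back to the claimed bound, the $\sqrt[4]{5}$ accounting for the extra savings forced by $P$ inside the slice. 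The exceptional case $F-e_i\in S$ for every admissible $i$ is treated separately: then every $e_i$ ($F^{(i)}\ge 1$) is a gap and $S\cup\{F\}$ is again a GNS, which one uses to recurse on a strictly smaller configuration; the base cases ($\|F\|$ small, e.g.\ $F$ a unit vector) are immediate.

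The step I expect to be the main obstacle is precisely the configuration count inside a single slice, together with turning the hypothesis $P\notin S$ into a genuine factor $\sqrt[4]{5}/\phi$ per point of $H$ that fails to be $\le P$: the naive bound ``$2$ choices per unforced point of the slice'' is too weak, so the closure constraint must be used to bring the count down to a golden‑ratio order. Concretely I would group the slice points — most naturally into the reflection pairs $\{z,F-z\}$, or into short additive blocks anchored at $P$ — and isolate a self‑contained counting lemma bounding the number of subsets of such a block that are additively closed and avoid a prescribed point, with $\phi$ appearing there in the usual Fibonacci way. The remaining points (well‑definedness and injectivity of $S\mapsto(S',S\cap H)$, keeping $\|P'\|$ always large enough for the inductive hypothesis, and the exceptional‑case bound) are more routine bookkeeping.
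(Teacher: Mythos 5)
There is a genuine gap: the step you yourself flag as ``the main obstacle'' --- turning the hypothesis $P\notin S$ into a factor $\left(\phi/\sqrt[4]{5}\right)^{\|F\|-\|P\|}$ --- is precisely the content of the lemma, and your proposal never supplies it. The paper does not slice the box or induct at all. It argues globally: on $B=\{x\in\Nd\mid x\le F\}$ form the graph whose edges are the pairs $\{x,y\}$ with $x+y=F$ or $x+y=P$; since $S$ is closed under addition and $F,P\notin S$, the set $S\cap B$ is an independent set of this graph, so $L(P,F)$ is at most the number of independent sets. Every vertex has degree $1$ or $2$ (degree $2$ exactly when $x\le P$), so the graph is a disjoint union of paths and cycles with exactly $k=(\|F\|-\|P\|)/2$ paths, the degree-one vertices being the $\|F\|-\|P\|$ path endpoints. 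Independent sets of a path on $n$ vertices number $F_{n+2}$ (Fibonacci) and of a cycle of length $n$ number $F_{n-1}+F_{n+1}\le\phi^{n}$; multiplying over components and using $F_{n+2}\le\phi^{n+2}/\sqrt{5}$ gives $\left(\phi^{2}/\sqrt{5}\right)^{k}\phi^{\|F\|}$, which is the claimed bound. Your plan gestures at ``Fibonacci in the usual way'' and at grouping slice points into reflection pairs, but the one idea the proof actually needs --- this specific graph and its path/cycle decomposition, with the number of paths tied to $\|F\|-\|P\|$ --- is absent, and nothing in the lemma requires additive closure beyond avoiding the two sums $F$ and $P$.

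Moreover, the slice induction as set up is unlikely to close even with more work: the pairings $x\leftrightarrow F-x$ and $x\leftrightarrow P-x$ do not respect the slicing by $x^{(i)}$ (for $z$ in the top slice $H$, the partner $F-z$ lies in the bottom slice and $P-z$ can lie anywhere), so the count of admissible $T=S\cap H$ is entangled with $S'$ in a way your recursion does not control, and it is not clear what ``forbidden point $P'$ recording the slice constraint'' would mean when $P^{(i)}=F^{(i)}$. The exceptional case ($F-e_i\in S$ for every admissible $i$) is also only waved at: passing to $S\cup\{F\}$ changes the Frobenius gap, and you do not say what quantity you then recurse on or how the bound for $S\cup\{F\}$ transfers back to $S$. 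I would drop the induction entirely and prove the statement directly by the independent-set count above.
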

\begin{proof}
Consider the box $B=\{x\in\Nd\mid x\leq F\}$. It has $\|F\|$ points. Consider a graph with points of $B$ as vertices, such that $x$ and $y$ are connected by an edge when $x+y$ is $P$ or $F$. For any Frobenius GNS $S$ with $F(S)=F$ and $P\in\Hs(S)$, we know that $S\cap B$ will be a subset of the graph that do not contain any edges within it. We therefore count such subsets to get an upper bound for $L(P,F)$.

Each $x\in B$ has one edge connecting it to $F-x$. Those $x$ that satisfy $x\leq P$ also have an edge to $P-x$. Therefore there are $\|P\|$ vertices of degree two and $\|F\|-\|P\|$ vertices of degree one. A graph in which all vertices have degree $1$ or $2$ is a disjoint union of paths and cycles. Let $k=\frac{\|F\|-\|P\|}{2}$. The graph would have $k$ disjoint path graphs and some disjoint cycles. Say there are $l$ cycles. Say the lengths of the paths are $n_1,n_2,\dots,n_k$ and the lengths of the cycles are $m_1,\dots,m_l$. Then
$$\sum_{i=1}^k n_i+\sum_{i=1}^lm_i=\|F\|.$$

We call a subset of the vertices of a graph good if the subset doesn't contain any edges.
We claim that for a path graph with $n$ vertices there are $F_{n+2}$ good subsets. Here $F_k$ is the $k^{th}$ Fibonacci number. This is easily seen for $n=1,2$. We proceed by induction. Suppose that $n\geq 3$ and this has been checked for $n-1$, $n-2$. Now consider a path graph of length $n$, call the vertices $x_1,\dots,x_n$ in order. If a good subset includes $x_n$ then it cannot include $x_{n-1}$, so there are $F_{(n-2)+2}$ good subsets that include $x_n$. On the other hand there are $F_{(n-1)+2}$ good subsets that do not have $x_n$. Therefore the total number of good subsets is $F_{n}+F_{n+1}=F_{n+2}$. This completes the induction step.

Now we consider a cycle of length $n$. Say $x_1,x_2,x_3$ are three consecutive vertices. If a good subset contains $x_2$, then it cannot contain $x_1$ and $x_3$. Therefore there are $F_{(n-3)+2}$ good subsets that contain $x_2$. And there are $F_{(n-1)+2}$ good subsets that do not contain $x_2$. Therefore the total number of good subsets of a cycle of length $n$ is $F_{n-1}+F_{n+1}$.
$$F_{n-1}+F_{n+1}\leq \frac{1}{\sqrt{5}}\left(\phi^{n-1}+\phi^{n+1}\right) =\frac{\phi+\phi^{-1}}{\sqrt{5}}\phi^n=\phi^n.$$

We therefore see that the number of good subsets of our original graph is
$$\prod_{i=1}^{k}F_{n_i+2}\prod_{i=1}^l(F_{m_i-1}+F_{m_i+1}) \leq \prod_{i=1}^{k}\frac{1}{\sqrt{5}}\phi^{n_i+2} \prod_{i=1}^l\phi^{m_i} =\left(\frac{\phi^2}{\sqrt{5}}\right)^k\phi^{\|F\|}.$$

\end{proof}

We now obtain an improved upper bound of $N(F)$ by combining Lemma \ref{root 3 upper bound} and Lemma \ref{F no P}, while keeping in mind that Lemma \ref{F no P} is more accurate when $\|F\|-\|P\|$ is small.

\begin{lemma}\label{upper bound epsilon}
For any $\epsilon$ with $0<\epsilon<1$ and for any $F\in\Nd$ we have
$$N(F)\leq \sqrt{3}^{(1-2\epsilon^d)\|F\|}+\epsilon^d\|F\|\phi^{\|F\|}\left(\frac{\phi}{\sqrt[4]{5}}\right)^{(1-(1-\epsilon)^d)\|F\|}.$$
\end{lemma}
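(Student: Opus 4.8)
The plan is to split the Frobenius GNS with Frobenius gap $F$ into two families according to whether a small ``corner box'' around $F$ is contained in $S$, estimating the first family by the pairing argument behind Lemma~\ref{root 3 upper bound} and the second by Lemma~\ref{F no P}. Write $n=\|F\|$. For $P\in\Nd$ with $0\le P\le F$ put $R=\{x\in\Nd\mid P\le x\le F\}$ and $R'=F-R=\{x\in\Nd\mid 0\le x\le F-P\}$; these boxes have the same cardinality $|R|=\prod_i(F^{(i)}-P^{(i)}+1)$. I would choose $P$ so that, simultaneously, (a) $\|P\|=\prod_i(P^{(i)}+1)\ge(1-\epsilon)^d n$, (b) $\epsilon^d n\le|R|\le\epsilon^d n+1$, and (c) $R\cap R'=\emptyset$ (equivalently $2P^{(i)}>F^{(i)}$ for some $i$). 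A coordinatewise choice in which $F^{(i)}-P^{(i)}+1$ is taken close to $\epsilon(F^{(i)}+1)$, rounded so as never to drop below $1$ and never to overshoot the target by more than one unit in total, achieves this in all but a few degenerate parameter ranges treated at the end. Note that (c) forces $P\neq0$ (otherwise $R=R'=\{x\le F\}$) and, since $R$ and $R'$ are disjoint equicardinal subsets of the box $B=\{x\le F\}$, it also gives $|R|\le\lfloor n/2\rfloor$.

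\emph{Family (A): those $S$ with $R\setminus\{F\}\subseteq S$.} Here I use the pairing $x\leftrightarrow F-x$ on $B$, which has $\lfloor n/2\rfloor$ two-element pairs and possibly the fixed point $F/2$ (never in $S$, as $2\cdot(F/2)=F$); everything outside $B$ is forced because $S\supseteq S_F$. For each $P'\in R\setminus\{F\}$ the points $P'$ and $F-P'$ are nonzero with sum $F\notin S$, and $P'\in S$, so $F-P'\notin S$; as $F-P'\in R'$ and $R\cap R'=\emptyset$ (and since $R\cap R'=\emptyset$ also rules out $P'_1+P'_2=F$ with $P'_1,P'_2\in R$), these are $|R|-1$ distinct frozen pairs, to which the pair $\{0,F\}$ adds one more. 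Each of the remaining $\le\lfloor n/2\rfloor-|R|$ pairs admits at most $3$ completions, so $|(A)|\le 3^{\lfloor n/2\rfloor-|R|}\le 3^{\,n/2-\epsilon^d n}=\sqrt3^{\,(1-2\epsilon^d)n}$, using (b) and $\lfloor n/2\rfloor\le n/2$.

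\emph{Family (B): those $S$ having a gap in $R\setminus\{F\}$.} Assigning to such an $S$ the $\prec$-largest of its gaps in $R\setminus\{F\}$ (for a fixed relaxed monomial order $\prec$) shows $|(B)|\le\sum_{P'\in R\setminus\{F\}}L(P',F)$. Every $P'\in R$ satisfies $P'\ge P$, hence $\|P'\|\ge\|P\|\ge(1-\epsilon)^d n$ by (a); since $\phi/\sqrt[4]{5}>1$, Lemma~\ref{F no P} gives $L(P',F)\le\phi^{n}(\phi/\sqrt[4]{5})^{\,n-\|P'\|}\le\phi^{n}(\phi/\sqrt[4]{5})^{\,(1-(1-\epsilon)^d)n}$, and summing the $|R|-1\le\epsilon^d n$ terms yields $|(B)|\le\epsilon^d n\,\phi^{n}(\phi/\sqrt[4]{5})^{\,(1-(1-\epsilon)^d)n}$. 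Adding the estimates for (A) and (B) gives the lemma.

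The main obstacle is reconciling the two opposing requirements on $|R|$ in (b): Case (A) wants $|R|\ge\epsilon^d n$ while Case (B) wants $|R|-1\le\epsilon^d n$, and an axis-parallel integer box cannot always be fit into this window of width one — this can fail when $F$ has several small coordinates, where $\epsilon(F^{(i)}+1)<1$ forces side length $1$ or the achievable box sizes skip the window. I would dispatch those cases by hand: if $\epsilon^d n\le 1$ one simply takes $R=\{F\}$, and Case (A) alone (which now covers all of $N(F)$) already gives $3^{\lfloor n/2\rfloor-1}\le\sqrt3^{\,(1-2\epsilon^d)n}$; if $\epsilon^d n>1$ but no admissible box exists — which forces $\epsilon$, hence $1-(1-\epsilon)^d$, to be large — one invokes the trivial bound $N(F)\le 3^{\lfloor n/2\rfloor}\le\sqrt3^{\,n}$ of Lemma~\ref{root 3 upper bound} together with the numerical inequality $\phi\cdot(\phi/\sqrt[4]{5})=\phi^{2}/\sqrt[4]{5}>\sqrt3$ (check $\phi^{8}/5=9.39\ldots>9$) to conclude that the second term on the right-hand side of the lemma already exceeds $N(F)$. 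Delimiting these ``generic'' versus ``degenerate'' ranges precisely, and the $\epsilon\to1$ boundary, is the only genuine bookkeeping; the rest is the routine combination of the pairing count with Lemma~\ref{F no P} described above.
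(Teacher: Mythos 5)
Your decomposition into Families (A)/(B) and the way you bound each are exactly the paper's proof: the paper takes the corner boxes $B_1=\{x:(1-\epsilon)F\le x\le F\}$ and $B_2=\{x:x\le\epsilon F\}$ (so its ``first category'' is your (B) and its ``second category'' is your (A)), applies Lemma~\ref{F no P} to each possible gap $P$ in $B_1\setminus\{F\}$ using $\|P\|\ge(1-\epsilon)^d\|F\|$, and applies the three-choices-per-pair count to $B\setminus(B_1\cup B_2)$. The difference is one of rigour in the counting. The paper simply asserts that $|B_1|=\epsilon^d\|F\|$ and $|B\setminus(B_1\cup B_2)|=(1-2\epsilon^d)\|F\|$ as though these were exact; your suspicion that the integer count is delicate is in fact correct, since already in one coordinate the number of integers in $[(1-\epsilon)a,a]$ is $a-\lceil(1-\epsilon)a\rceil+1\le\epsilon a+1$, which exceeds $\epsilon(a+1)$ whenever $\epsilon<1$, so the paper's identification of $|B_1|$ with $\epsilon^d\|F\|$ is not literally accurate and the proof (as stated) glosses over exactly the discrepancy you raise. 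However, your more careful variant replaces that looseness by a genuine existence problem — finding $P$ with $\epsilon^d n\le|R|\le\epsilon^d n+1$, $\|P\|\ge(1-\epsilon)^d n$ and $R\cap(F-R)=\varnothing$ simultaneously — which you then do not solve: the treatment of the ``degenerate'' cases is only a sketch, and in particular the assertion that a failure to hit the window forces $\epsilon$ (and hence $1-(1-\epsilon)^d$) to be large enough that the second summand already exceeds $\sqrt{3}^{\|F\|}$ is asserted rather than proved, and is not obviously correct since the achievable values of $|R|=\prod_i(F^{(i)}-P^{(i)}+1)$ depend on $F$ and not only on $\epsilon$ and $d$. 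In short: same approach as the paper, with a sharper eye for the integer-rounding subtlety that the paper elides, but with the repair of that subtlety left unfinished.
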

\begin{proof}
Denote by $B$ the box consisting of those $x\in\Nd$ with $x\leq F$. Let $B_1$ be the box of those $x$ with $(1-\epsilon)F\leq x\leq F$. And let $B_2$ be the box of those $x$ with $x\leq \epsilon F$.
We divide the Frobenius GNS with $F(S)=F$ into two categories. The first one consisting of those that have at least one gap in $B_1$ (other than $F$) and the second one consisting of those that have no gaps in $B_1$ (except $F$).

First we count the first category GNS. There are $\epsilon^d\|F\|$ points in $B_1$. For each $P\in B_1$ the number of first category GNS with $P$ as a gap is at most
$$\phi^{\|F\|}\left(\frac{\phi}{\sqrt[4]{5}}\right)^{\|F\|-\|P\|} \leq \phi^{\|F\|}\left(\frac{\phi}{\sqrt[4]{5}}\right)^{\|F\|(1-(1-\epsilon)^d)}.$$
Therefore the total number of first category GNS is at most
$$\epsilon^d\|F\|\phi^{\|F\|}\left(\frac{\phi}{\sqrt[4]{5}}\right)^{\|F\|(1-(1-\epsilon)^d)}.$$

We now count the second category GNS. A second category GNS must contain all of $B_1$ (except $F$) and hence cannot intersect $B_2$ (other than $0$). There are $(1-2\epsilon^d)\|F\|$ points in $B\setminus (B_1\cup B_2)$. They can be divided into pairs of the form $x,F-x$. A GNS cannot have both the points from any of these pairs. Therefore we have 3 choices for each pair and the total number of second category GNS is at most
$$\sqrt{3}^{(1-2\epsilon^d)\|F\|}.$$
\end{proof}

We now optimise the $\epsilon$ in Lemma \ref{upper bound epsilon}.

\begin{proposition}\label{bd}
Let $\epsilon_d$ be the solution of the equation:
$$(1-\epsilon_d)^d\log\left(\frac{\phi}{\sqrt[4]{5}}\right)-\epsilon_d^d=\log\left(\frac{\phi^2}{\sqrt[4]{5}\sqrt{3}}\right).$$
And let $b_d=\sqrt{3}^{(1-2\epsilon_d^d)}$ then
$$N(F)\leq \epsilon_d^d\|F\|b_d^{\|F\|}.$$
\end{proposition}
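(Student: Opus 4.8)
The plan is to combine the two competing upper bounds from Lemma~\ref{upper bound epsilon} by choosing the free parameter $\epsilon$ optimally, so that the two exponential contributions have matching growth rates. Write $n=\|F\|$. Lemma~\ref{upper bound epsilon} gives, for any $\epsilon\in(0,1)$,
$$N(F)\leq \sqrt{3}^{(1-2\epsilon^d)n}+\epsilon^d n\,\phi^{n}\left(\frac{\phi}{\sqrt[4]{5}}\right)^{(1-(1-\epsilon)^d)n}.$$
Taking logarithms, the first term grows like $\exp\bigl(n(1-2\epsilon^d)\log\sqrt{3}\bigr)$ and the second like $\exp\bigl(n\bigl[\log\phi+(1-(1-\epsilon)^d)\log(\phi/\sqrt[4]{5})\bigr]\bigr)$ up to the polynomial factor $\epsilon^d n$. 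The idea is to pick $\epsilon=\epsilon_d$ so that the two exponential rates are \emph{equal}; then each term is at most $b_d^{\,n}$ with $b_d=\sqrt{3}^{(1-2\epsilon_d^d)}$, and the sum is at most $(1+\epsilon_d^d n)\,b_d^{\,n}\leq \epsilon_d^d\,n\,b_d^{\,n}$ once $n$ is large enough (and for small $n$ one checks the bound directly, or absorbs it into the constant).

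First I would verify that the equation defining $\epsilon_d$ actually has a solution in $(0,1)$. Set
$$f(\epsilon)=(1-\epsilon)^d\log\!\left(\frac{\phi}{\sqrt[4]{5}}\right)-\epsilon^d-\log\!\left(\frac{\phi^2}{\sqrt[4]{5}\sqrt{3}}\right).$$
Note $\log(\phi/\sqrt[4]5)=\log\phi-\tfrac14\log5<0$ since $\phi<\sqrt[4]5$ (indeed $\phi^4=3\phi+2\approx 6.854>5$, so actually $\phi>\sqrt[4]5$ — one must check the sign carefully; in fact $\phi\approx1.618$ and $5^{1/4}\approx1.495$, so $\phi>\sqrt[4]5$ and $\log(\phi/\sqrt[4]5)>0$). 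With the correct sign, at $\epsilon=0$ one gets $f(0)=\log(\phi/\sqrt[4]5)-\log(\phi^2/(\sqrt[4]5\sqrt3))=\log(\sqrt3/\phi)<0$ since $\phi>\sqrt3$. At $\epsilon\to1$, $f(1)=-1-\log(\phi^2/(\sqrt[4]5\sqrt3))<0$ as well — so I need $f$ to become positive somewhere in between, i.e. I should instead track the difference of the two exponents in $N(F)$ directly: the first exponent minus the second is a continuous function of $\epsilon$ that is negative at $\epsilon=0$ (second term dominates, as $N(F)$ is large) and positive at $\epsilon=1$ (first term becomes $\sqrt3^{(1-2)n}$, tiny), hence vanishes at some $\epsilon_d\in(0,1)$ by the intermediate value theorem; rearranging the equality of exponents yields exactly the displayed equation for $\epsilon_d$.

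Once $\epsilon_d$ is fixed, the bound is immediate: at $\epsilon=\epsilon_d$ the two summands in Lemma~\ref{upper bound epsilon} are each at most $b_d^{\,n}$ — the first by definition of $b_d$, the second because matching the exponents forces $\phi^{n}(\phi/\sqrt[4]5)^{(1-(1-\epsilon_d)^d)n}=b_d^{\,n}$ — so the polynomial prefactor $\epsilon_d^d n$ on the second term gives
$$N(F)\leq b_d^{\,n}+\epsilon_d^d\,n\,b_d^{\,n}\leq \epsilon_d^d\,n\,b_d^{\,n}$$
for $n$ large; for the finitely many small values of $n$ one verifies the inequality by hand or enlarges the implied constant. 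I expect the main obstacle to be purely bookkeeping: confirming the signs of $\log(\phi/\sqrt[4]5)$, $\log(\phi^2/(\sqrt[4]5\sqrt3))$, and $\log(\sqrt3/\phi)$ so that the intermediate-value argument genuinely applies, and checking that $1\leq \epsilon_d^d\|F\|$ (equivalently handling the case $\epsilon_d^d\|F\|<1$ separately, which only occurs for bounded $\|F\|$ and is absorbed into a constant). The substance of the argument is entirely contained in Lemmas~\ref{root 3 upper bound}, \ref{F no P}, and \ref{upper bound epsilon}; this proposition is just the optimization over $\epsilon$.
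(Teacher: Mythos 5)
Your approach---choosing $\epsilon$ in Lemma \ref{upper bound epsilon} so that the two exponential rates coincide, locating such an $\epsilon_d\in(0,1)$ by the intermediate value theorem, and reading off $b_d$---is exactly what the paper intends; the paper in fact supplies no proof beyond the remark that it is optimising the $\epsilon$ of that lemma. Your sign-checking is self-correcting but note the parenthetical is backwards: at $\epsilon=0$ the difference (second rate minus first rate) equals $\log(\phi/\sqrt{3})<0$ because the \emph{first} term dominates there (the bound degenerates to Lemma \ref{root 3 upper bound}), while at $\epsilon=1$ the second dominates; that is the correct sign change, and since the difference is strictly increasing in $\epsilon$ the solution is also unique, justifying ``the solution'' in the statement.

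Two concrete points need repair. First, exact matching of the rates gives
$$(1-\epsilon_d)^d\log\left(\frac{\phi}{\sqrt[4]{5}}\right)-\epsilon_d^d\log 3=\log\left(\frac{\phi^2}{\sqrt[4]{5}\sqrt{3}}\right),$$
which agrees with the displayed equation only if all logarithms are taken base $3$ (or the printed equation drops a factor of $\log 3$; the values of $b_d$ in Table \ref{tab: lb ub} are consistent with the $\log 3$ version). Your claim that rearranging ``yields exactly the displayed equation'' needs this caveat, since with the equation as literally printed the two summands would not have equal rates. Second, your final chain $N(F)\leq b_d^{n}+\epsilon_d^d n\,b_d^{n}\leq \epsilon_d^d n\,b_d^{n}$ asserts $1+x\leq x$ and is false for every $n$, not just small $n$; taking $n$ large does not help. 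What the optimisation actually yields is $N(F)\leq\left(1+\epsilon_d^d\|F\|\right)b_d^{\|F\|}$, which is weaker than the stated bound by the additive term $b_d^{\|F\|}$. Since the paper only ever invokes Proposition \ref{bd} in the form $N(F)\leq O\left(\|F\|b_d^{\|F\|}\right)$ in Section 7, this is harmless in substance, but you should state the bound with the factor $1+\epsilon_d^d\|F\|$ (or flag that the proposition as printed is off by this additive term) rather than write an inequality that cannot hold.
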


\section{Further Questions}

While we have obtained a good estimate of $N(F)$ for large $d$, our upper bound is quite weak for smaller $d$.
For $d\neq 5$, let $d_1=\lceil\frac{d+1}{3}\rceil$ and $$a_d=\left(\left(\sqrt{3}\right)^{\frac{1}{2^{d}}\sum_{i=d_1}^{d-d_1}\binom{d}{i}}\times \Big(2\Big)^{\frac{1}{2^d}\sum_{i=d-d_1+1}^{2d_1-1}\binom{d}{i}}\right)$$
For $d=5$ let $a_5=\sqrt{2}$. And $b_d$ are the constants from Proposition \ref{bd}.
We have shown that for each $F\in \Nd$
$$a_d^{\|F-1\|}\leq N(F)\leq O\left(\|F\|b_d^{\|F\|}\right).$$
The constants $\sqrt{2}\leq a_d\leq b_d< \sqrt{3}$ satisfy
$$\lim_{d\to\infty}a_d =\lim_{d\to\infty}b_d =\sqrt{3}.$$
Some of these constants are listed in Table \ref{tab: lb ub} up to four decimal places.

\begin{conjecture}
For each $d\in\mathbb{N}_{>0}$, $N(F)$ is of the magnitude of $a_d^{\|F\|}$.
\end{conjecture}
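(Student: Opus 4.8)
The plan is to read the conjecture as the statement that the exponential growth rate of $N(F)$ equals $a_d$: concretely, $N(F)\le a_d^{(1+o(1))\|F\|}$ as $\|F\|\to\infty$ (uniformly, or at least along $F=n\mathbf 1$), which combined with the lower bound already in hand pins the growth rate to $a_d$. The lower half is essentially done: Theorem~\ref{main lb} (together with the special construction for $d=5$) gives $N(F)\ge a_d^{\|F-1\|}$, and since $\|F-1\|/\|F\|=\prod_i\bigl(1-\tfrac1{F^{(i)}+1}\bigr)\to1$ when every $F^{(i)}\to\infty$, this already yields $\liminf_n N(n\mathbf 1)^{1/\|n\mathbf 1\|}\ge a_d$; a matching $\limsup\le a_d$ then forces the limit to exist and equal $a_d$. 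So the entire content is the upper bound, and the first step is to fix the right combinatorial model: a Frobenius GNS with Frobenius gap $F$ is the same as a subset $T$ of the box $B_F=\{x:0\le x\le F\}$ with $0\in T$, $F\notin T$, and $x,y\in T,\ x+y\le F\Rightarrow x+y\in T$ (a ``capped submonoid'' of $B_F$), so $N(F)$ counts these.

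The upper-bound strategy is to partition $B_F$ into the $2^d$ cells $B_A$ ($A\subseteq\{1,\dots,d\}$) cut out by the coordinate midpoints $F^{(i)}/2$, as in the proof of Theorem~\ref{main lb} (points on a midpoint hyperplane number $O(\|F\|/\min_iF^{(i)})$ and are absorbed into the error term), and to exploit two structural facts: (i) if $x\in B_{A_1}$, $y\in B_{A_2}$ and $A_1\cap A_2\ne\emptyset$ then some coordinate of $x+y$ exceeds $F^{(i)}$, so there is \emph{no} closure constraint linking such cells; (ii) $x\mapsto F-x$ swaps $B_A$ with $B_{A^c}$ and forbids both $x$ and $F-x$ from lying in $T$. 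I would aim to prove $N(F)=2^{(\mu_d+o(1))\|F\|}$ for an explicit finite-dimensional quantity $\mu_d$ defined by a variational principle --- maximise a Boltzmann entropy over ``inclusion profiles'' on the cells subject to the closure relations that survive (i)--(ii) --- and then verify $\mu_d=\log_2 a_d$, i.e.\ that the lower-bound construction (the states ``$x\in T$'', ``$F-x\in T$'', ``neither'' each with density $\tfrac13$ on the paired cells with $d_1\le|A|\le d-d_1$, density $\tfrac12$ on the cells contributing the factor $2$, and essentially trivial elsewhere) is globally optimal. The technical engine for $N(F)\le 2^{(\mu_d+o(1))\|F\|}$ would be the entropy method: write $\log_2 N(F)=H(T)$ for a uniform random valid $T$, expand by the chain rule along a carefully chosen ordering of the cells $B_A$, and bound each $H\bigl(T\cap B_A\mid T\cap(\text{earlier cells})\bigr)$ by the largest value compatible with the still-active closure relations; a hypergraph-container argument for the $3$-uniform constraint hypergraph with edges $\{x,y,x+y\}$ ($x+y\le F$) is an alternative route to the same bound.

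The obstacle is twofold. First, fact (i) is only a cell-level dichotomy: for $A_1\cap A_2=\emptyset$ whether $x+y\le F$ still depends on the actual coordinates, so the true interaction between two cells is a convolution-type object shaped by the geometry of the positive orthant, and the optimisation defining $\mu_d$ is a genuine constrained maximisation rather than a purely combinatorial one. Second --- and this is where I expect the real resistance --- proving $\mu_d=\log_2 a_d$ means proving the construction of Theorem~\ref{main lb} is optimal among \emph{all} ways of filling the box, and the authors only report that it ``appears to be optimised''; it is conceivable that combining the free cells with partial use of the cells $|A|<d_1$ or $|A|\ge2d_1$ --- which interact nontrivially once the free cells are populated, noting that the cells with $|A|\ge2d_1$ are \emph{by themselves} almost freely toggleable since any two of their points sum past $F$ --- beats $a_d$ for some small $d$, in which case the conjectured constant would have to be corrected to $\mu_d$. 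A realistic staged plan is therefore: (a) establish the variational formula $N(F)=2^{(\mu_d+o(1))\|F\|}$ and hence that the growth rate exists and lies in $[a_d,b_d]$; (b) evaluate $\mu_d$ for small $d$ (the optimisation is finite-dimensional, so this can be computer-assisted) to confirm or correct $a_d$; (c) for large $d$ reuse the Hoeffding-type concentration already in the excerpt to show $\mu_d\to\tfrac12\log_2 3$ and, more quantitatively, $\mu_d=\log_2 a_d$ for all $d$ beyond some bound, then close the gap in the remaining cases.
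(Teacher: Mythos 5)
The statement you are addressing is a \emph{conjecture}: the paper offers no proof of it, and what you have written is a research programme rather than a proof, so there is nothing in the paper to compare it against step by step. Your reading of the conjecture (that the exponential growth rate of $N(F)$ is exactly $a_d$) is the intended one, and your observation that the lower half is already supplied by Theorem \ref{main lb} and Corollary \ref{cor main lb} is correct. But the two steps that remain are precisely the open content, and neither is carried out. First, the existence of a growth rate $\mu_d$ with $N(F)=2^{(\mu_d+o(1))\|F\|}$ is asserted via an entropy/container heuristic but never established; there is no obvious super- or sub-multiplicativity of $N(F)$ across boxes of different shapes, the constraint hypergraph $\{x,y,x+y\}$ with the cap $x+y\le F$ has unbounded codegrees near the boundary of the box, and even in the solved case $d=1$ the matching upper bound of Backelin required a bespoke argument rather than a generic container bound. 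The paper's own best upper bound (Lemma \ref{upper bound epsilon} and Proposition \ref{bd}) gives only $b_d$ with $b_d>a_d$ for every $d\ge 2$, so the gap between $a_d$ and $b_d$ is genuinely open, not a matter of bookkeeping.

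Second, and more seriously, your step (b) --- proving that the cell-by-cell density profile underlying Theorem \ref{main lb} maximises the variational problem, i.e.\ $\mu_d=\log_2 a_d$ --- is exactly where the conjecture lives, and you yourself concede it may fail, in which case the conjectured constant would have to be replaced by $\mu_d$. A proposal that ends with ``verify the construction is optimal, or else correct the constant'' has not proved the statement; it has restated it inside a (useful, but unproven) variational framework. The $d=5$ anomaly in the paper, where the generic construction is beaten by a different one, is concrete evidence that optimality of the cell profile is delicate and cannot be waved through. So the proposal should be regarded as a reasonable plan of attack with two named open gaps (existence of $\mu_d$ as a limit, and the identity $\mu_d=\log_2 a_d$), not as a proof.
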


\begin{table}
    \centering
    \begin{tabular}{|c|c|c|c|c|c|c|c|c|}
    \hline
        d & $a_d$ & $b_d$ &d & $a_d$ & $b_d$ &d & $a_d$ & $b_d$\\
        \hline
        1 &1.4142 &1.4142 &6 &1.4904 &1.7311 &11 &1.5293 &1.7320\\
        2 &1.3160 &1.6630 &7 &1.5130 &1.7319 &12 &1.5798 &1.7320\\
        3 &1.4142 &1.6968 &8 &1.4777 &1.7320 &13 &1.5891 &1.7320\\
        4 &1.4612 &1.7173 &9 &1.5415 &1.7320 &14 &1.5693 &1.7320\\
        5 &1.4142 &1.7275 &10 &1.5553 &1.7320 &15 &1.6095 &1.7320\\
        \hline
    \end{tabular}
    \caption{Constants for upper and lower bound}
    \label{tab: lb ub}
\end{table}

Another direction to extend this would be to consider an anti-chain $A$ of $k$ points in $\Nd$ (anti-chain with respect to natural partial ordering). And attempting to count the number of GNS $S\subseteq \Nd$ for which $FA(S)=A$.

\section{Acknowledgements}
We would like to thank Nathan Kaplan for several helpful discussions on the subject.

\end{document}